\title{Splitting and successively solving augmented Lagrangian method for optimization with semicontinuous variables and cardinality constraint\thanks{This research was supported by a grant from the National Natural Science Foundation of China (No.11371242)}}
\author{Yanqin Bai\thanks{Department of Mathematics, Shanghai University, Shanghai,200444, China.
(\email{yqbai@shu.edu.cn})}
\and Renli Liang\thanks{Department of Mathematics, Shanghai University, Shanghai, 200444, China.
(\email{13720011@shu.edu.cn})}
\and Zhouwang Yang\thanks{School of Mathematical Sciences, University of Science and Technology of China, Hefei, 230026, China
(\email{yangzw@ustc.edu.cn })}
}
\begin{document}
\maketitle
\slugger{simax}{xxxx}{xx}{x}{x--x}
\begin{abstract}
We propose a new splitting and successively solving augmented Lagrangian (SSAL) method for solving an optimization problem with both semicontinuous variables and a cardinality constraint. This optimization problem arises in several contexts such as the portfolio selection problem, the compressed sensing problem and the unit commitment problem, etc.
The problem is in general NP-hard. We derive an optimality condition for this optimization problem, under some suitable assumptions.
By introducing an auxiliary variable and using an augmented Lagrangian function, the constraints are decomposed into two parts.
By fixing particular variables, the optimization problem is split into two subproblems.
The first one is an easy solving convex programming problem, while the second one is a complicated quadratic programming problem with semicontinuous variables and cardinality constraint. The two subproblems are solved alternatively.
The second subproblem is transformed equivalently into a mixed integer quadratic programming problem.
Based on its features of both the objective function and the constraints, we solve the mixed integer quadratic programming problem successively, then obtain its exact optimal solution directly. Furthermore, we prove the convergence of SSAL, under some suitable assumptions.
Finally, we implement our method for the portfolio selection problem and the compressed sensing problem, respectively.
Real world data and simulation study show that SSAL outperforms the well-known CPLEX 12.6 and the penalty decomposition (PD) method, while enjoying a similar cardinality of decision variable. For example, the numerical results are even nearly 200 times faster than that of CPLEX 12.6 for the portfolio selection problem, and more than 40 times faster than PD method for the compressed sensing problem, respectively. In particularly, SSAL is powerful when the size of problem increase largely.
\end{abstract}

\begin{keywords}
semicontinuous variables and cardinality constraint optimization problem,
alternating direction method of multipliers,
augmented Lagrangian decomposition,
\end{keywords}

\begin{AMS} 90C11, 90C27, 90C30\end{AMS}
%

\section{Introduction} \label{se:introduction}
In this paper we consider the following minimization problem:
\begin{eqnarray*}
\rm{(P)}~~~~\min  ~~ &&f(x) \\
 {\rm s.t.}~~~&&x\in X,\\
 && \|x\|_{0}\leq K,\\
 && x_{i}\in \{0\}\cup[a_{i},b_{i}],\quad i\in\{1,2,\dots,n\},
\end{eqnarray*}
where $x\in R^{n}$, $f(x)$: $R^{n}\rightarrow R$ is a continuously differentiable convex function and  $X\in R^{n}$ is a closed convex set with no empty interior, which represents the general constraints for $x$. $\|x\|_{0}$ denotes the cardinality (i.e., the number of nonzero entries) of the vector $x$. Furthermore $0<K<n$, $0<a_{i}< b_{i}$.
A variable $x_{i}\in \{0\}\cup[a_{i},b_{i}]$ is referred to as a semicontinuous variable.

The problem (P) arises in several contexts such as the portfolio selection problem (see \cite{Bienstock96}, \cite{cui}, \cite{Gaojianjun2013}), the compressed sensing problem (see \cite{Bersimas09}, \cite{Bonami09}), the separable quadratic facility location problem \cite{semi} and the unite commitment problem (see \cite{Frangioni06}), ect. Obviously, the constraints of (P) are both nonconvex and nonsmooth. Problem (P) is a highly nonlinear optimization problems and may have many local minimizers. So, it is difficult to solve problem (P), even if to find its local solutions. In general, problem (P) is NP hard \cite{Bienstock96}. For decades, several analytical and approximate methods, including reformulation, approximation and relaxation, have been developed for solving problem (P).

Padberg and Rinaldi \cite{Padberg:1991} firstly proposed a branch-and-cut algorithm for the symmetric traveling salesman problem, which is an integer linear programming problem.
Frangioni and Gentile \cite{Frangioni06} developed the branch-and-cut algorithm for a general program with a convex objective function and the semicontinuous constraint, by using a perspective function to derive lower bounds on the objective function value.
Zheng, Sun and Li devolved the SDP approach by applying a special Lagrangian decomposition scheme for a quadratic programming problem with the cardinality and the semicontinuous constraints, which gives the tightest continuous relaxation of the perspective reformulation of the problem in \cite{Zheng14}.
The above exact methods are able to solve only a fraction of practically useful models, a variety of heuristic procedures have also been proposed.
Chang et al. presented three heuristic algorithms based
upon genetic algorithms, tabu search and simulated annealing for the standard
mean variance portfolio optimisation model with cardinality constrained in \cite{chang00}.
There are also various approximate methods and techniques for dealing with the cardinality or sparse constraint.
Burdakov et al presented a reformulation for the cardinality-constrained problems, including the semi-continuous case and they formulated the problems into a mixed-integer ones in literature \cite{Burdakov14}. Moreover, they also introduced a mixed-integer formulation whose standard relaxation still has the same solutions (in the sense of global minima)
as the underlying cardinality-constrained problem \cite{Burdakov15}. Furthermore,
they derived the suitable stationarity conditions and suggested an appropriate
regularization method for the solution of optimization problems with cardinality
constraints.
Bai et al proposed an alternating direction method of multipliers (ADMM) method  for $\ell_{1}-\ell_{2}$ regularized logistic regression model in \cite{Bai-Shen-orsc}. And Bai and Shen  developed this method for the consensus proximal support vector machine for classification in \cite{Bai3}.

Recently,
Lu and Zhang \cite{penal} proposed a novel penalty
decomposition (PD) method for an $l_{p}$ norm minimization problem which minimizes a general nonlinear convex function subject to $\|x\|_{0} \leq K$. Besides the cardinality constraints, Cui et al. \cite{cui} investigated a portfolio selection problem with both the cardinality constraints and the semicontinuous constraints, which was reformulated as a mixed integer quadratically constrained quadratic program (MIQCQP).
They devolved the branch-and-bound method by reformulating MIQCQP to a new MIQCQP reformulation, which has a more tighter continuous relaxation bound than the MIQCQP.
They reported the computational results for problems with up to 400 assets.
Although the speed of the PD method is generally faster than the hard-thresholding algorithm \cite{Blumensath08}, the solution obtained by the PD method is not better enough for the solution quality.
Besides, the reformulation of Cui et al. \cite{cui} has more constraints and variables than MIQCQP and caused the difficulty that the branch-and-bound method is slow and fails to solve large-scale problems.

Inspired by requirements of both large-scale problems and the better solution quality,
we propose a new method for solving problem (P), which covers the portfolio selection problem and the compressed sensing problem as special cases. Under mild assumptions,we derive the optimality conditions of problem (P).
By introducing an auxiliary variable and using an augmented Lagrangian function, the constraints are separated into two parts.
By fixing particular variables, the optimization problem is split into two subproblems.
The first one is an easy to solve convex programming problem, while the second one is a complicated quadratic programming problem with semicontinuous variables and cardinality constraint.
Therefore, the second subproblem is transformed into an equivalent  mixed integer quadratic programming problem with $0-1$ knapsack constraint.
Then we propose the augmented Lagrangian alternating direction method and solve the two subproblems alternatively.
The second one can be solved analytically, due to features of both the objective function and the constraints. 
Furthermore, we prove that our method is convergent.
Finally, we implement our method for the portfolio selection problem and the compressed sensing problem.
The numerical results demonstrate that our method is faster than several existing methods. For example, in case of the compressed sensing problem, our method is more than 40 times faster than that of the penalty decomposition (PD) method in \cite{penal}.

The paper is organized as follows.
In Section \ref{sec:Examples of Applications}, we describe some examples of problem (P) arising from different real-world applications.
In Section \ref{sec:The first order optimality conditions}, we establish the first order optimality conditions for the problem (P).
In Section \ref{sec:Alternating direction method} we present our method for solving problem (P).
We name it the Splitting and Successively solving Augmented Lagrangian method (abbreviated as SSAL).
We also establish some convergence results.
In Section \ref{sec:Numerical results}, we conduct numerical experiments to test the performance of SSAL for the portfolio selection problem in \cite{cui} and compressed sensing problems.
Finally, we present some conclusions in Section \ref{sec:Conclusion}.

\subsection{Notation}\label{subsec:Notation}

In this paper, the symbols $R^{n}$ and $R^{n}_{+}$ denote the n-dimensional Euclidean space and the nonnegative orthant of $R^{n}$, respectively.
For any vectors $x=[x_{1},x_{2},\dots,x_{n}]^{T}$, $y=[y_{1},y_{2},\dots,y_{n}]^{T}$,
$x\cdot y=[x_{1}y_{1},x_{2}y_{2},\dots,x_{n}y_{n}]^{T}$, and
$x^{2}=x\cdot x$.
Given an index set $L\subseteq\{1,2,\dots,n\}$ , $|L|$ denotes the size of $L$, $\bar{L}$ is the complement of $L$ in $\{1,2,\dots,n\}$.
$x_{L}$ denotes the subvector formed by the entries of $x$ indexed by $L$. Likewise, $X_{L}$ denotes the submatrix formed by the columns of $X$ indexed by $L$. We denote by I the identity matrix, whose dimension should be clear from the context. Given a closed set $C\subseteq R^{n}$, $N_{C}(x)$ and $T_{C}(x)$ denote the normal and tangent cone of $C$ at any $x\in C$, respectively. For any real vector, $\|\cdot\|_{0}$ and $\|\cdot\|$ denote the cardinality (i.e., the number of nonzero entries) and the Euclidean norm of the vector, respectively.

\section{Examples of Applications}\label{sec:Examples of Applications}
In this section, we describe some examples of problem (P) arising from the portfolio
selection, the compressed sensing and the subset selection. More applications can be found in \cite{semi}.

\subsection{Portfolio selection problem}
Let $\mu$ and $Q$ be the mean vector and the covariance matrix of $n$ assets with return $r=(r_{1},r_{2}\cdots r_{n})^{T}$, respectively. Let
$x=(x_{1},x_{2}\cdots x_{n})^{T}$ be the portfolio weights to the $n$ risky assets. 
In real-world, most investors only choose a small number of stocks to invest. In other words, we need to consider the constraint $\|x\|_{0}\leq K$ to control the total number of different assets in the optimal portfolio.  We also need to prevent the investors from holding some assets with very small amount due to the transaction cost and managerial concerns. So we need to consider the semicontinuous constraints: $x_{i}\in \{0\}\cup[a_{i},b_{i}], i\in \{1,2,\dots,n\}$. A mean-variance portfolio selection model with semicontinuous and cardinality constraints can be modeled as:
\begin{eqnarray*}
{\rm(SP)}~~~~\min~~ &&x^{T}Qx     \\
 {\rm s.t.}~ ~~ &&\mu^{T}x \geq\rho_{0},    \\
&& e^{T}x=1,    \\
&&  \|x\|_{0}\leq K,  \\
 &&  x_{i}\in \{0\}\cup[a_{i},b_{i}],\quad i\in \{1,2,\dots,n\}.
\end{eqnarray*}
where $e=(1,1\cdots1)^{T}$, $\rho_{0}$ is the prescribed return level.
\subsection{Compressed sensing problem}
Compressed sensing (CS) is an important problem in signal processing (see, for example, \cite{S.Chen98}).The CS problem with nonnegativity constraints can be formulated as
\begin{eqnarray*}
{\rm (NCS)}~~~~\min ~~&& \frac{1}{2}\|Ax-b\|^{2}\\
 {\rm s.t.}~ ~~ &&\|x\|_{0}\leq K,\quad x\geq0,
\end{eqnarray*}
where $A\in R^{p\times n}$ is a data matrix, $b\in R^{p}$ is an observation vector, $K$ is an integer for controlling the sparsity of the solution. More applications and reformulations of problem (NCS) can be found in \cite{O'Grady08} and \cite{Khajehnejad11}. In the compressed sensing problem, it is often assumed that $p<n$.

In multivariate linear regression, we are given $p$ observed data points $(a_{i}, b_{i})$, with $a_{i}\in R^{n}$ and $b_{i}\in R$.
The goal is to minimize the least square measure of $\sum_{i=1}^{p}(a_{i}^{T}x-b_{i})^{2}$ with only a subset of the prediction variables in $x$. This subset selection problem then has the same form as the problem (NCS).
In contrast with the case of compressed sensing, the number of data in subset selection is often much larger than the dimension of the data $(p>n)$.

In practice, we can always impose an upper bound on $x$, i.e. $ x\leq u$, for some sufficiently large positive numbers $u$ (see \cite{semi}).
\begin{eqnarray*}
{\rm (NCSB)}~~~~\min ~~&&\frac{1}{2}\|Ax-b\|^{2}\\
{\rm s.t.}~ ~~ &&\|x\|_{0}\leq K,\quad 0\leq x\leq u.
\end{eqnarray*}

%
\section{The first order stationary conditions}
\label{sec:The first order optimality conditions}
In this section, we study the first order stationary conditions of the problem (P).
\begin{theorem}\label{necessary optimality conditions}
Assume that $x^{*}$ is a local minimizer of problem $(P)$. Let $J^{*}=\{1\leq j\leq n:x^{*}_{j} \neq 0\}$, $r^{*}=|J^{*}|$. Suppose that the following Robinson condition
\begin{equation}\label{Robinson}
\begin{split}
\left\{\left(
  \begin{array}{c}
d_{J^{*}}-s\\
-d_{J^{*}}-t\\
d_{\bar{J}^{*}}
  \end{array}
\right)
\Bigg| \begin{array}{l}  d\in T_{X}(x^{*}), s_{A(x^{*})}\leq0, t_{B(x^{*})}\leq0
\end{array}
\right\}=R^{n+r^{*}}
\end{split}
\end{equation}
holds,
where
\begin{equation*}
 A(x^{*})=\{ i\in J^{*}:x_{i}^{*}=b_{i}    \}, \quad
 B(x^{*})=\{ i\in J^{*}:x_{i}^{*}=a_{i}    \}.
\end{equation*}
Then there exists
$(\nu^{*},\eta^{*},\kappa^{*})\in  R^{n}\times R^{n}\times R^{n}$ together with $x^{*}$ satisfying
\begin{equation}\label{opt}
\begin{split}
0\in \nabla f(x^{*})+\nu^{*}-\eta^{*}+\kappa^{*}+N_{X}(x^{*}),\\
\nu_{i}^{*}\geq 0,\quad \eta_{i}^{*}\geq 0,\quad \nu^{*}_{i} (x^{*}_{i}-b_{i})=0,\quad \eta^{*}_{i}(-a_{i}+ x_{i}^{*})=0,\quad i=1,\dots,n,\\
\kappa_{j}^{*}=0,\quad j\in J^{*}.
\end{split}
\end{equation}
\end{theorem}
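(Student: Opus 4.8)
The plan is to derive the stated stationarity conditions from the classical first-order optimality conditions for a smooth optimization problem over a set described by smooth constraints plus an abstract convex constraint, once the combinatorial structure has been frozen at the local minimizer $x^{*}$. The key observation is that, although the feasible region of $(P)$ is nonconvex and nonsmooth because of the cardinality constraint and the disjunctive semicontinuous constraints, locally around $x^{*}$ this combinatorial structure is fixed. Specifically, let $J^{*}=\{j:x^{*}_{j}\neq 0\}$ with $|J^{*}|=r^{*}$. Since $x^{*}$ is a local minimizer, for coordinates $j\in\bar{J}^{*}$ we have $x^{*}_{j}=0$, and because $x^{*}_{j}$ is isolated from the interval $[a_{j},b_{j}]$ (recall $0<a_{j}$), any sufficiently small feasible perturbation must keep these coordinates at zero; moreover $r^{*}\leq K$, so the cardinality constraint $\|x\|_{0}\leq K$ is locally inactive in the sense that small perturbations preserving the support cannot violate it. First I would make this precise: I would argue that $x^{*}$ is also a local minimizer of the smooth, locally convex-constrained problem
\begin{equation*}
\min\; f(x)\quad\text{s.t.}\quad x\in X,\; x_{i}\leq b_{i},\; x_{i}\geq a_{i}\ (i\in J^{*}),\; x_{j}=0\ (j\in\bar{J}^{*}),
\end{equation*}
obtained by dropping the cardinality constraint and replacing each disjunction by the branch active at $x^{*}$.

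Next I would apply the standard Karush--Kuhn--Tucker theorem for this reduced problem in the form valid under a constraint qualification and with an abstract convex set $X$ present, i.e. the Lagrange multiplier rule of the Robinson/Zowe--Kurcyusz type. The Robinson condition \eqref{Robinson} is precisely the constraint qualification guaranteeing the existence of multipliers: the inequality constraints $x_{i}\leq b_{i}$ contribute the $d_{J^{*}}-s$ block with $s_{A(x^{*})}\leq 0$ encoding the active upper bounds $A(x^{*})$, the constraints $x_{i}\geq a_{i}$ contribute the $-d_{J^{*}}-t$ block with $t_{B(x^{*})}\leq 0$ encoding the active lower bounds $B(x^{*})$, and the equality constraints $x_{j}=0$ on $\bar{J}^{*}$ contribute the $d_{\bar{J}^{*}}$ block; requiring this image set to equal all of $R^{n+r^{*}}$ is exactly surjectivity of the constraint linearization relative to the tangent cone $T_{X}(x^{*})$. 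Under this qualification the generalized Lagrange multiplier rule yields multipliers $\nu^{*}$ (for the upper bounds), $\eta^{*}$ (for the lower bounds), and $\kappa^{*}$ (for the equalities $x_{j}=0$ on $\bar{J}^{*}$), together with an element of the normal cone $N_{X}(x^{*})$, satisfying the stationarity inclusion
\begin{equation*}
0\in\nabla f(x^{*})+\nu^{*}-\eta^{*}+\kappa^{*}+N_{X}(x^{*}),
\end{equation*}
where I set the inactive components of $\nu^{*},\eta^{*}$ to zero and the components of $\kappa^{*}$ indexed by $J^{*}$ to zero. The sign conditions $\nu^{*}_{i}\geq 0$, $\eta^{*}_{i}\geq 0$ and the complementarity relations $\nu^{*}_{i}(x^{*}_{i}-b_{i})=0$, $\eta^{*}_{i}(x^{*}_{i}-a_{i})=0$ are the usual KKT consequences, and $\kappa^{*}_{j}=0$ for $j\in J^{*}$ records that the equality constraints are imposed only off the support.

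The main obstacle, and the step deserving the most care, is the reduction in the first paragraph: justifying rigorously that local optimality for $(P)$ transfers to local optimality for the reduced smooth problem, and that the cardinality and disjunctive constraints may be dropped without altering the first-order information. The delicate point is that the feasible set of $(P)$ near $x^{*}$ need not coincide with that of the reduced problem — the reduced problem has a larger feasible set near $x^{*}$, so I must verify that no feasible direction opened up by relaxing the disjunctions or the cardinality bound gives a first-order decrease. Here I would use the strict separation $a_{j}>0$ to argue that the off-support coordinates are genuinely pinned at zero in a neighborhood (any move into $[a_{j},b_{j}]$ is a jump of size at least $a_{j}$, not an infinitesimal perturbation), and the strict inequality $r^{*}\leq K$ together with $r^{*}=\|x^{*}\|_{0}$ to argue the cardinality constraint imposes no active local restriction along support-preserving directions; directions that increase the support are again non-infinitesimal for the same reason. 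Once the feasible cone of $(P)$ at $x^{*}$ is identified with the tangent cone of the reduced feasible set intersected with the fixed coordinate subspace, the transfer of the Robinson qualification and the multiplier rule is routine, and the three groups of conditions in \eqref{opt} follow directly.
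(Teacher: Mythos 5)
Your proposal is correct and follows essentially the same route as the paper's proof: freeze the support $J^{*}$, pass to the smooth support-fixed problem (the paper's $(\overline{P})$, rewritten as $(\overline{P1})$), invoke the Lagrange multiplier rule under Robinson's constraint qualification (the paper cites Theorem 3.25 of Ruszczy\'nski) with exactly your identification of the three constraint blocks, and extend the multipliers by zero off the relevant index sets. One assertion in your reduction step is backwards, though harmlessly so: the reduced problem's feasible set is \emph{not} larger than that of $(P)$ near $x^{*}$; it is contained in it globally, since any $x$ with $a_{i}\leq x_{i}\leq b_{i}$ for $i\in J^{*}$ and $x_{j}=0$ for $j\in\bar{J}^{*}$ automatically satisfies the semicontinuous constraints and $\|x\|_{0}\leq r^{*}\leq K$ (this is your own observation about the cardinality bound being inactive on support-preserving points). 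Hence local optimality for $(P)$ transfers to the reduced problem immediately, with no need to rule out descent along ``newly opened'' directions; your pinning argument based on $a_{j}>0$ establishes the reverse inclusion, i.e.\ that the two feasible sets actually coincide in a small ball around $x^{*}$, which is more than the theorem requires but certainly does no harm.
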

\begin{proof}
The problem (P) can be reformulated to the following problem:
\begin{eqnarray*}
{\rm (P)}~~~~\min  ~~ &&f(x) \\
 {\rm s.t.}~ ~~&&x\in X,\\
&& a_{i}\leq x_{i}\leq b_{i}, \quad i\in supp(x),\\
 &&  x_{i}=0 ,\quad i\in \overline{supp}(x),\\
 &&  |supp(x)|\leq K,
\end{eqnarray*}
where $supp(x)=\{1\leq i\leq n~|~x_{i}\neq 0\}$, and $\overline{supp}(x)$ is the complement of $supp(x)$ in $\{1,2,\dots,n\}$.
By the assumption that $x^{*}$ is a local minimizer of the problem (P), one can observe that $x^{*}$ is also a local minimizer of the following problem:
\begin{eqnarray*}
{\rm (\overline{P})}~~~~\min  ~~ &&f(x) \\
 {\rm s.t.}~ ~~&&x\in X,\\
&&a_{i}\leq x_{i}\leq b_{i},\quad i\in J^{*},\\
 &&  x_{j}=0,\quad j\in \bar{J}^{*}.
\end{eqnarray*}
We observe that the problem ($\bar{P}$) can be reformulated equivalently as
\begin{eqnarray*}
{\rm(\overline{P1})}~~~~\min  ~~&& f(x) \\
 {\rm s.t.}~ ~~&&x\in X,\\
&& I^{T}_{J^{*}}(x-b)\leq 0,\\
 &&  I^{T}_{J^{*}}(-x+a)\leq 0,\\
  &&I^{T}_{\bar{J}^{*}}x= 0.
\end{eqnarray*}
Using theorem 3.25 of \cite{A.Ruszcunski}, when the Robinson condition $(\ref{Robinson})$ holds, there exists
$(\nu^{*},\eta^{*},\kappa^{*})\in  R^{n}\times R^{n}\times R^{n}$ together with $x^{*}$ satisfying
 \begin{eqnarray}
0\in \nabla f(x^{*})+\nu^{*}-\eta^{*}+\kappa^{*}+N_{X}(x^{*}),\\
\nu_{i}^{*}\geq 0,\quad\eta_{i}^{*}\geq 0,\quad\nu^{*}_{i} (x^{*}_{i}-b_{i})=0,\quad\eta^{*}_{i}(-a_{i}+ x_{i}^{*})=0,\quad i\in J^{*} \label{k2},\\
\nu^{*}_{i}=0,\quad \eta^{*}_{i}=0,\quad i\in \bar{J^{*}}\label{k3},\\
\kappa_{j}^{*}=0,\quad j\in J^{*}.
 \end{eqnarray}
Note that $(\ref{k2})$ implies that if $x_{i}^{*}\neq0$ and $x_{i}^{*}\neq b_{i}$ , then $\nu^{*}_{i}=0$. If $x_{i}^{*}\neq0$ and $y_{i}^{*}\neq a_{i}$, then $\eta^{*}_{i}=0$. The equation $(\ref{k3})$ implies that if $x_{i}^{*}=0$ , then $\nu^{*}_{i}=0$ and $\eta^{*}_{i}=0$. Combining $(\ref{k2})$ with $(\ref{k3})$, we obtain
 \begin{equation*}
 \nu_{i}^{*}\geq 0,\quad\eta_{i}^{*}\geq 0,\quad\nu^{*}_{i} (x^{*}_{i}-b_{i})=0,\quad\eta^{*}_{i}(-a_{i}+ x_{i}^{*})=0,\quad i=1,2,\dots,n.
 \end{equation*}
It implies that the conclusion holds.
\end{proof}
\section{SSAL algorithm}
\label{sec:Alternating direction method}
In this section, we propose a SSAL method for solving problem (P). Firstly, we derive the augmented Lagrangian decomposition formulation ($P_{2}$) of $(P)$. Secondly, we discuss two subproblems obtained from fixing some variables respectively. Thirdly, we propose the method for solving problem (P) which minimizes (P$_{2}$) with respect to $x$, $y$ in an alternating fashion while updating $\lambda$ in the iteration. Finally, we establish some convergence results for problem (P).

Introducing $y=x$, we can rewrite (P) as
\begin{eqnarray*}
{\rm(P_{1})}~~~~\min~~&&f(x) \\
 {\rm s.t.}~ ~~&&x\in X,\\
&&y=x,\\
&& \|y\|_{0}\leq K,\\
&&y_{i}\in\{0\}\cup[a_{i},b_{i}],\quad  i=1,2,\dots,n.
\end{eqnarray*}
%
Then, we define the augmented Lagrangian function for (P$_{1}$) as follows:
\begin{equation*}
L_{\rho}(x,y,\lambda)=f(x)+\lambda^{T}(y-x)+\frac{\rho}{2}\|y-x\|^{2}
\end{equation*}
where $\lambda\in R^{n}$ is the Lagrangian multiplier associated with the equality constraint $y=x$ and $\rho>0$ is a penalty parameter. The resulting augmented Lagrangian decomposition formulation is:
\begin{eqnarray*}
{\rm(P_{2})}~~~~\min~~&&L_{\rho}(x,y,\lambda) \\
 {\rm s.t.}~ ~~ &&x\in X,\\
&& y\in Y,
\end{eqnarray*}
where
\begin{equation}\label{S}
 Y=\{y\in R^{n}|~\|y\|_{0}\leq K, y_{i}\in\{0\}\cup [a_{i},b_{i}]\}.
\end{equation}
 In the following part, we first discuss the subproblem $P_{x}$ obtained from fixing the variable $y$, and then we discuss the subproblem $P_{y}$ for fixing the variable $x$.
\subsection{The convex programming problem of variables x}
\label{subsec:The convex programming problem of variables x}
We notice that for given $\lambda=\bar{\lambda}$, when $y=\bar{y}\in Y$ is fixed, (P$_{2}$) becomes:
\begin{eqnarray*}
{\rm(P_{x})}~~~~~\min  ~~&&L_{\rho}(x,\bar{y},\bar{\lambda})\\
{\rm s.t.}~ ~~ && x\in X.
\end{eqnarray*}
Note that $X$ is a convex set and the objective function of (P$_{x}$) is a convex function of $x$. Thus, (P$_{x}$) is a convex programming problem of variables $x$.
\subsection{Minimization of separable quadratic function}
\label{subsec:Minimization of separable quadratic function}
We consider the subproblem of (P$_{2}$) for given $\lambda=\bar{\lambda}$ when $x=\bar{x}\in X$ is fixed, problem (P$_{2}$) becomes
 \begin{eqnarray*}
{\rm(P_{y})}~~~~\min  ~~&&L_{\rho}(\bar{x},y,\bar{\lambda})\\
{\rm s.t.}~ ~~ && y\in Y.
\end{eqnarray*}
%
%
The object function $L(\bar{x},y,\bar{\lambda})$ is a quadratic function of variable $y$, so we can simplify the objective function $L(\bar{x},y,\bar{\lambda})$, and then transform the subproblem $(P_{y})$ to the $2-$norm approximation problem with semi-continuous variables and the cardinality constraint.
\begin{equation*}
\begin{split}
L_{\rho}(\bar{x},y,\bar{\lambda})&=f(\bar{x})+\bar{\lambda}^{T}(y-\bar{x})+\frac{\rho}{2}\|y-\bar{x}\|^{2}\\
&=f(\bar{x})-\bar{\lambda}^{T}\bar{x}+\frac{\rho}{2}\|\bar{x}\|^{2}+\frac{\rho}{2}[\|y\|^{2}-2(\bar{x}-\frac{1}{\rho}\bar{\lambda})^{T}y].
\end{split}
\end{equation*}
Let $w=\bar{x}-\frac{1}{\rho}\bar{\lambda}$, the object function  $L(\bar{x},y,\bar{\lambda})$ can be rewritten as:
\begin{equation}\label{abouty}
\begin{split}
L_{\rho}(\bar{x},y,\bar{\lambda})&=f(\bar{x})-\bar{\lambda}^{T}\bar{x}+\frac{\rho}{2}\|\bar{x}\|^{2}+\frac{\rho}{2}(\|y\|^{2}-2w^{T}y)~\\
&=f(\bar{x})-\bar{\lambda}^{T}\bar{x}+\frac{\rho}{2}\|\bar{x}\|^{2}-\frac{\rho}{2}\|w\|^{2}+\frac{\rho}{2}\|y-w\|^{2}\\
&=f(\bar{x})-\frac{1}{2\rho}\|\bar{\lambda}\|^{2}+\frac{\rho}{2}\|y-w\|^{2}
\end{split}
\end{equation}
For given $\bar{\lambda}$, when $x=\bar{x}\in X$ is fixed, $\frac{1}{2\rho}\|\bar{\lambda}\|^{2}$, $w=\bar{x}-\frac{1}{\rho}\bar{\lambda}$ are constants.
From (\ref{abouty}), the minimization of the problem $(P_{y})$ is equivalent to the following problem :
\begin{eqnarray*}
{\rm(P_{3})}~~~~\min~~&&\sum_{i=1}^{n}(y_{i}-w_{i})^{2}\\
 {\rm s.t.}~ ~~&&y\in Y.
\end{eqnarray*}
Introducing a variable $z_{i}\in \{0,1\}$ to indicate the zero or nonzero status of each decision variable $y_{i}$, $i=1,\dots n$. Problem ($P_{3}$) can be reformulated to the following mixed integer program (MIP), which can be solved by CPLEX 12.6.
\begin{eqnarray*}
{\rm(MIP)}~~~~\min  ~~&&\sum_{i=1}^{n}(y_{i}-w_{i})^{2}\\
 {\rm s.t.}~ ~~&&e^{T}z\leq K,\quad z_{i}a_{i}\leq y_{i}\leq z_{i}b_{i},\\
 &&z_{i}\in \{0,1\},\quad i=1,2,\dots,n.
\end{eqnarray*}
Let $\bar{y}$ be the optimal solution of the problem ($P_{y}$), then there exists $\bar{z}$ together with $\bar{y}$ such that $(\bar{y},\bar{z})$ is the optimal solution of the problem ($P_{3}$).
Because the CPLEX 12.6 ignores the feature of this problem (MIP), it is slow, especially for large-scale problems. So we propose new method to obtain the optimal solution $(\bar{y},\bar{z})$ of (MIP).

Note that the value of variable $z_{i}$ only has two situations: either $z_{i}=0$ or  $z_{i}=1$. Moreover $z_{i}=0$ indicates $y_{i}=0$ and $z_{i}=1$ indicates $a_{i}\leq y_{i}\leq b_{i}$, $i=1,2,\dots,n$. We can get the optimal solution $(\bar{y},\bar{z})$ of problem (MIP) successively.

Firstly, taking into account that either $z_{i}=1$ or  $z_{i}=0$,
we introduce auxiliary variables $q_{i}$ and $r_{i}$ as follows:
\begin{equation*}
 q_{i}=\min_{ z_{i}=0}(y_{i}-w_{i})^{2}=\min_{y_{i}=0}(y_{i}-w_{i})^{2}=w_{i}^{2} , \mbox{\ where\ }\bar{y}_{i}=0,
\end{equation*}
\begin{eqnarray*}
 r_{i}&&=\min_{ z_{i}=1}(y_{i}-w_{i})^{2}=\min_{a_{i}\leq y_{i}\leq b_{i}}(y_{i}-w_{i})^{2}\\
&&=
\begin{cases}
\begin{split}
&(a_{i}-w_{i})^{2}    &{\rm{if}}\quad w_{i}<a_{i},~ & {\rm{where}}~\bar{y}_{i}=a_{i},\\
&0         &{\rm{if}} \quad a_{i}\leq w_{i}\leq b_{i},~ & {\rm{where}} ~\bar{y}_{i}=w_{i},\\
&(b_{i}-w_{i})^{2},    &{\rm{if}}\quad w_{i}>b_{i},~ & {\rm{where}} ~\bar{y}_{i}=b_{i}.
\end{split}
 \end{cases}
 \end{eqnarray*}
%

Secondly, we transform the problem (MIP) to the following problem, which is a optimization problem over $z$ :
\begin{equation}\label{11}
\begin{split}
&\min\{\sum_{i=1}^{n}(y_{i}-w_{i})^{2}~|e^{T}z\leq K,z\in \{0,1\}^{n},z_{i}a_{i}\leq y_{i}\leq z_{i}b_{i} ,i=1,2,\dots,n\}\\
&=\min\{\sum_{i=1}^{n}r_{i}z_{i}+(1-z_{i})q_{i}~|~e^{T}z\leq K,z\in \{0,1\}^{n}\}\\
&=\min\{\sum_{i=1}^{n}q_{i}+(r_{i}-q_{i})z_{i}~|~e^{T}z\leq K,z\in \{0,1\}^{n}\}\\
&=\sum_{i=1}^{n}w_{i}^{2}+\min\{\sum_{i=1}^{n}(r_{i}-w_{i}^{2})z_{i}~|~e^{T}z\leq K,z\in \{0,1\}^{n}\}
\end{split}
\end{equation}
From (\ref{11}), the optimal solution of problem (MIP) is obtained by solving
  \begin{equation*}
 {\rm(ILP)}~~~ \min\{\sum_{i=1}^{n}(r_{i}-w_{i}^{2})z_{i}~|~e^{T}z\leq K,z\in \{0,1\}^{n}\}.
  \end{equation*}
If $K=n$, the cardinality constraint is void, hence we are left with the simple problem
\begin{equation*}
 \min\{\sum_{i=1}^{n}(r_{i}-w_{i}^{2})z_{i}~|z\in \{0,1\}^{n}\},
 \end{equation*}
whose optimal solution is given by:
\begin{equation*}
\bar{z}_{i}=
\begin{cases}
1,& {\rm{if}}~ r_{i}-w_{i}^{2}\leq 0,\\
0,&{\rm{otherwise}}.
 \end{cases}
\end{equation*}
In order to deal with the general situation (i.e., if $K<n$) we define $v=r-w^{2}$.
Without loss of generality we assume  that
$v$ is sorted in ascending order: $v_{1}\leq v_{2}\leq \dots \leq v_{n}$. Then
we have the following theorem.
\begin{theorem}\label{qiuz}
The optimal solution $\bar{z}$ of problem (ILP) is given by
\begin{equation*}
\bar{z}_{i}=
\begin{cases}
1,&{\rm{if}}~i\leq K  ~{\rm{and}}~v_{i}\leq 0 ,\\
0,&{\rm{otherwise}}.
 \end{cases}
\end{equation*}
  \end{theorem}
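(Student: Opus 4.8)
The plan is to argue directly by a short combinatorial decomposition/exchange argument, since (ILP) is a separable minimization of $\sum_i v_i z_i$ over the Boolean cube subject to a single cardinality budget $e^{T}z\le K$. First I would record feasibility of the candidate $\bar z$: by construction $e^{T}\bar z = |\{i : i\le K,\ v_i\le 0\}|\le K$ and $\bar z\in\{0,1\}^{n}$, so $\bar z$ is feasible. Writing $m=|\{i : v_i\le 0\}|$ and using that $v$ is sorted in ascending order, the indices with $v_i\le 0$ are exactly the prefix $N=\{1,\dots,m\}$; hence $\bar z$ selects precisely the index set $\bar S=\{1,\dots,\min(K,m)\}$, i.e. the $\min(K,m)$ smallest (most negative) entries of $v$.

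Next I would fix an arbitrary feasible point $z$, set $S=\{i : z_i=1\}$ with $|S|\le K$, and split the objective as $\sum_{i} v_i z_i = \sum_{i\in S\cap N} v_i + \sum_{i\in S\setminus N} v_i$. The second sum ranges only over indices with $v_i>0$, so it is nonnegative and may be discarded from below. It then remains to show $\sum_{i\in S\cap N} v_i \ge \sum_{i\in\bar S} v_i$. Since $S\cap N\subseteq N$ and $|S\cap N|\le|S|\le K$, we get $|S\cap N|\le\min(K,m)=|\bar S|$; moreover every entry of $N$ is $\le 0$ while $\bar S$ collects the $\min(K,m)$ most negative entries of $N$. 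Therefore $\bar S$ attains the smallest possible sum among all subsets of $N$ of cardinality at most $\min(K,m)$, yielding $\sum_{i\in\bar S} v_i\le\sum_{i\in S\cap N} v_i$. Chaining the two inequalities gives $\sum_{i} v_i z_i\ge\sum_{i} v_i\bar z_i$, so $\bar z$ is optimal.

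The main obstacle, and the only point needing care, is this last comparison, where the cardinality budget and the signs of the entries interact. The intuition is that, because all entries of $N$ are nonpositive, lowering the sum calls for using as many terms as the budget permits and choosing the most negative ones, which is exactly what $\bar S$ does by spending the full admissible count $\min(K,m)$ on the smallest entries. I would make this rigorous by a one-line exchange argument starting from $S\cap N$: repeatedly insert any unused index of $\bar S$ (whose entry is $\le 0$, so the sum never increases) and, once the count $\min(K,m)$ is reached, swap any selected index lying outside $\bar S$ for a smaller one inside $\bar S$ (again not increasing the sum), terminating at $\bar S$. Finally I would note the harmless non-uniqueness arising from entries with $v_i=0$: toggling such an index leaves the objective unchanged, which is precisely why the stated $\bar z$ is one optimal solution among possibly several.
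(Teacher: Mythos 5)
Your proof is correct. It rests on the same greedy principle as the paper's proof --- within the budget $K$, select the most negative entries of $v$ --- but the two write-ups differ in how much they actually prove. The paper's proof is a two-case assertion: if the number of negative entries of $v$ is less than $K$, the optimal value is declared to be the sum of all negative entries; if it is more than $K$, the sum of the $K$ smallest entries; nothing beyond ``it is obvious'' is offered in support, and the boundary case where the count of nonpositive entries equals $K$ is formally skipped by that dichotomy. You instead unify all cases through $m=|\{i:v_i\le 0\}|$ and $\bar S=\{1,\dots,\min(K,m)\}$, verify feasibility of $\bar z$, discard the nonnegative part $\sum_{i\in S\setminus N}v_i$ of any competitor, and reduce optimality to the single comparison $\sum_{i\in S\cap N}v_i\ge\sum_{i\in\bar S}v_i$, which you establish by an insert-then-swap exchange argument. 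That exchange step is sound: insertions add only nonpositive entries; each swap replaces an index outside $\bar S$ by one inside $\bar S$ whose value is no larger, by the ascending ordering of $v$; and the one potentially troublesome configuration $\bar S\subsetneq T$ cannot occur because $|T|\le\min(K,m)=|\bar S|$. In short, your route buys rigor and a uniform treatment (including the $m=K$ case and the harmless non-uniqueness at entries with $v_i=0$), while the paper's route buys only brevity.
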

\begin{proof}
It is obvious that the optimal value is either negative or zero.
If the number of the negative entries of $v$ is less than $K$,  the optimal value is the sum of all negative entries of $v$, so the optimal solution is:
\begin{equation*}
\bar{z}_{i}=
\begin{cases}
1,&{\rm{if}}~v_{i}\leq 0,\\
0,&{\rm{otherwise}}.
 \end{cases}
\end{equation*}
If the number of the negative entries of $v$ is more than $K$, the optimal value is the sum of the $k$ least entries of $v$,
so the optimal solution becomes.
\begin{equation*}
\bar{z}_{i}=
\begin{cases}
1,&{\rm{if}}~i\leq K,\\
0,&{\rm{otherwise}}.
 \end{cases}
\end{equation*}
\end{proof}

After Theorem \ref{qiuz}, the explicit solution of $(P_{y})$ can be given as in
the next theorem.

 \begin{theorem}\label{updateyz}
For given $\lambda=\bar{\lambda}$ and $x=\bar{x}\in X$, the optimal solution $\bar{y}$ of the problem ($P_{y}$) is given by
\begin{equation*}
\bar{y}_{i}=\bar{z}_{i}\cdot\min\{b_{i},\max\{w_{i},~a_{i}\}\},\quad for\quad  i=1,2,\dots n,
\end{equation*}
where $\bar{z}$ can be obtained from Theorem \ref{qiuz}.
 \end{theorem}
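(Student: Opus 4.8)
The plan is to exploit the full separability of problem ($P_3$) once the binary indicator vector $z$ is fixed. After fixing $z$, the objective $\sum_{i=1}^{n}(y_{i}-w_{i})^{2}$ decouples across coordinates, and each $y_{i}$ is confined to a one-dimensional set determined solely by $z_{i}$: the constraint $z_{i}a_{i}\leq y_{i}\leq z_{i}b_{i}$ forces $y_{i}=0$ when $z_{i}=0$ and $a_{i}\leq y_{i}\leq b_{i}$ when $z_{i}=1$. This is exactly the decomposition that produced the scalars $q_{i}$ and $r_{i}$ preceding Theorem \ref{qiuz}: $q_{i}=w_{i}^{2}$ is the least value of $(y_{i}-w_{i})^{2}$ when $z_{i}=0$, attained at $y_{i}=0$, and $r_{i}$ is the least value when $z_{i}=1$, attained at the projection of $w_{i}$ onto $[a_{i},b_{i}]$.

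First I would recall from the derivation of (ILP) that $(\bar{y},\bar{z})$ solves (MIP), and that (ILP) is obtained precisely by minimizing out $y$ for each fixed $z$. Consequently, once the optimal indicator $\bar{z}$ is supplied by Theorem \ref{qiuz}, the associated optimal $\bar{y}$ is recovered coordinate by coordinate as the inner minimizer corresponding to $\bar{z}_{i}$.

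Next I would carry out the two-case recovery. When $\bar{z}_{i}=0$, feasibility forces $\bar{y}_{i}=0$. When $\bar{z}_{i}=1$, the optimal $\bar{y}_{i}$ minimizes the scalar quadratic $(y_{i}-w_{i})^{2}$ over $[a_{i},b_{i}]$; since the unconstrained minimizer is $w_{i}$, the constrained minimizer is the Euclidean projection of $w_{i}$ onto $[a_{i},b_{i}]$, i.e.\ $w_{i}$ clipped to the interval. The three branches in the definition of $r_{i}$, namely the values $a_{i}$, $w_{i}$, $b_{i}$ according as $w_{i}<a_{i}$, $a_{i}\leq w_{i}\leq b_{i}$, $w_{i}>b_{i}$, coincide with $\min\{b_{i},\max\{w_{i},a_{i}\}\}$, so I would verify this identity by the corresponding three-case check.

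Finally I would merge the two cases into one expression: multiplying the clipped value $\min\{b_{i},\max\{w_{i},a_{i}\}\}$ by $\bar{z}_{i}$ yields $0$ when $\bar{z}_{i}=0$ and the clipped value itself when $\bar{z}_{i}=1$, giving $\bar{y}_{i}=\bar{z}_{i}\cdot\min\{b_{i},\max\{w_{i},a_{i}\}\}$ for every $i$. The step demanding the most care is the justification that the joint optimum decouples, that is, that fixing $z=\bar{z}$ and then minimizing each $y_{i}$ independently really does recover a global optimizer of (MIP); this follows from the separability of both the objective and the coupling constraints once $z$ is held fixed, combined with the construction of (ILP) by partial minimization over $y$. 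The remaining verification of the projection formula is then an elementary case analysis.
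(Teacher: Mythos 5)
Your proposal is correct and is essentially the paper's own argument: the paper states Theorem \ref{updateyz} without a separate proof precisely because the coordinate-wise recovery you describe is already built into the preceding derivation --- the definitions of $q_{i}$ and $r_{i}$ record the attaining minimizers ($\bar{y}_{i}=0$ when $z_{i}=0$; $a_{i}$, $w_{i}$, or $b_{i}$ when $z_{i}=1$, i.e.\ the projection of $w_{i}$ onto $[a_{i},b_{i}]$), and the reformulation (\ref{11}) of (MIP) into (ILP) is exactly the partial minimization over $y$ that justifies recovering $\bar{y}$ from $\bar{z}$. Your explicit attention to why the joint optimum decouples is a welcome clarification, but it is the same route, not a different one.
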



Yet we are ready to describe our algorithm, named Splitting and Successively solving Augmented Lagrangian method or SSAL method, as in Algorithm \ref{SSAL}.
\label{subsec:Algorithm}

\begin{algorithm}[!htb]
\DontPrintSemicolon 
\KwIn{Choose tolerance parameter $\epsilon \geq0$,
multiplier vector $\lambda^{0}$, penalty parameter $\rho>0$ and the step-size $\omega$.
Set $y^{0}\in Y$ and the iteration counter $k=0$.}
\KwOut{An approximate optimal solution $(x, z)$  of problem ($P$).}
Solve the subproblem problem (P$_{x}$) with $\bar{y}=y^{0}$ to obtain an optimal solution $x^{0}$\;
\While{$\|x^{k}-y^{k}\|^{2}_{2}> \epsilon$ }{
    update $z^{k+1}$ and $y^{k+1}={\rm{argmin}}_{y\in Y}L_{\rho}(x^{k},y,\lambda^{k})$, according to Theorem~\ref{qiuz} and Theorem~\ref{updateyz} \;
   update $x^{k+1}={\rm{argmin}}_{x\in X}L_{\rho}(x,y^{k+1},\lambda^{k})$\;
  update the multipliers by
  \begin{equation}\label{updatelambda}
  \lambda_{i}^{k+1}=\lambda_{i}^{k}+\omega\rho[y_{i}^{k+1}-x_{i}^{k+1}],\quad i=1,2,\dots,n
  \end{equation}\;
    increase $k$ by one and continue\;
}
\Return{$x=x^{k}$ and $z=z^{k}$}\;
\caption{SSAL method for solving the problem (P)\label{SSAL}}
\end{algorithm}

\subsection{Convergence analysis}
\label{subsec:Convergence analysis}
In this subsection, we show that any accumulation point of the sequence generated by SSAL method satisfies the first order stationary conditions.
%
\begin{theorem}\label{conver}
Let $(\hat{x},\hat{y},\hat{\lambda})$ be any accumulation point of $(x^{k},y^{k},\lambda^{k})$ generated by algorithm 1, $J=\{1\leq j\leq n:\hat{x}_{j} \neq 0\}$ and $r=|J|$. Assuming that
 $\{\lambda^{k}\}$ is bounded, we have
 \begin{equation}\label{iteration}
 \lim_{k\rightarrow\infty}[(x^{k+1},y^{k+1},\lambda^{k+1})-(x^{k},y^{k},\lambda^{k})]=0
 \end{equation}
  and the Robinson condition $(\ref{Robinson})$ holds at $\hat{x}$  for such $J$ .
Then, $\hat{x}$ satisfies the first order stationary conditions $(\ref{opt})$.
\end{theorem}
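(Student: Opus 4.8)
The plan is to split the argument into two stages: first I would establish the asymptotic regularity (\ref{iteration}), namely that consecutive iterates coalesce, and then pass to the limit in the first-order conditions of the two subproblems to recover (\ref{opt}). Throughout I work along a subsequence with $x^{k_j}\to\hat x$; once (\ref{iteration}) is in hand, the shifted iterates $x^{k_j+1},y^{k_j+1}$ and the multipliers $\lambda^{k_j}$ share the same limits, and since $Y$ and $X$ are closed while $z^{k_j+1}$ takes finitely many values, we may assume $y^{k_j+1}\to\hat y\in Y$, $\lambda^{k_j}\to\hat\lambda$ and $z^{k_j+1}\to\hat z$ as well.

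The core technical step is (\ref{iteration}), for which I would use the augmented Lagrangian $L_\rho$ as a Lyapunov function. The $y$-step gives the weak descent $L_\rho(x^k,y^{k+1},\lambda^k)\le L_\rho(x^k,y^k,\lambda^k)$ because $y^k\in Y$ and $y^{k+1}$ is a global minimizer over $Y$; the $x$-step minimizes the $\rho$-strongly convex function $L_\rho(\cdot,y^{k+1},\lambda^k)$ over the convex set $X$, which yields $L_\rho(x^k,y^{k+1},\lambda^k)-L_\rho(x^{k+1},y^{k+1},\lambda^k)\ge\frac{\rho}{2}\|x^{k+1}-x^k\|^2$; and the dual step (\ref{updatelambda}) raises $L_\rho$ by exactly $\frac{1}{\omega\rho}\|\lambda^{k+1}-\lambda^k\|^2$. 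The delicate point is to dominate this dual ascent: from the first-order condition of the $x$-update one can express $\lambda^{k+1}$ through $\nabla f(x^{k+1})$ and an element of $N_X(x^{k+1})$, so that $\|\lambda^{k+1}-\lambda^k\|$ is controlled by $\|x^{k+1}-x^k\|$. Combining this bound with the strongly convex descent and the boundedness of $\{\lambda^k\}$ (which keeps $L_\rho$ bounded below along the sequence) produces a telescoping estimate forcing $\sum_k\|x^{k+1}-x^k\|^2<\infty$ and $\sum_k\|\lambda^{k+1}-\lambda^k\|^2<\infty$. Hence $x^{k+1}-x^k\to0$ and $\lambda^{k+1}-\lambda^k=\omega\rho(y^{k+1}-x^{k+1})\to0$; the identity $y^{k+1}-y^k=(y^{k+1}-x^{k+1})+(x^{k+1}-x^k)+(x^k-y^k)$ then gives $y^{k+1}-y^k\to0$, establishing (\ref{iteration}). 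In particular $\hat y-\hat x=\lim(y^{k+1}-x^{k+1})=0$, so $\hat x=\hat y\in Y$ is feasible for (P) and $r=|J|\le K$.

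For the second stage I pass to the limit in the stationarity of the $x$-subproblem, $0\in\nabla f(x^{k+1})-\lambda^k-\rho(y^{k+1}-x^{k+1})+N_X(x^{k+1})$. Letting $j\to\infty$ and using continuity of $\nabla f$, the vanishing of $\rho(y^{k+1}-x^{k+1})$, and the closedness of the graph of the normal-cone mapping of the convex set $X$, I obtain $0\in\nabla f(\hat x)-\hat\lambda+N_X(\hat x)$. To read off the bound multipliers I invoke the explicit $y$-update of Theorem~\ref{updateyz} and Theorem~\ref{qiuz}: writing $\hat w=\hat x-\frac{1}{\rho}\hat\lambda$, in the limit $\hat x_i=\hat z_i\min\{b_i,\max\{\hat w_i,a_i\}\}$. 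For $i\in J$ we must have $\hat z_i=1$, so $\hat x_i$ is the projection of $\hat w_i$ onto $[a_i,b_i]$; this forces $\hat\lambda_i=0$ when $a_i<\hat x_i<b_i$, $\hat\lambda_i\ge0$ when $\hat x_i=a_i$, and $\hat\lambda_i\le0$ when $\hat x_i=b_i$.

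It remains to construct $(\nu^*,\eta^*,\kappa^*)$ and verify (\ref{opt}). I set, for $i\in J$, $\kappa_i^*=0$ and absorb $-\hat\lambda_i$ into $\nu_i^*=\max\{-\hat\lambda_i,0\}$, $\eta_i^*=\max\{\hat\lambda_i,0\}$, while for $i\notin J$ (where $\hat x_i=0$) I set $\nu_i^*=\eta_i^*=0$ and $\kappa_i^*=-\hat\lambda_i$. By construction $\nu^*-\eta^*+\kappa^*=-\hat\lambda$, and the sign information extracted from the projection yields $\nu_i^*\ge0$, $\eta_i^*\ge0$ together with the complementarity relations $\nu_i^*(\hat x_i-b_i)=0$ and $\eta_i^*(\hat x_i-a_i)=0$; substituting into the limiting inclusion reproduces exactly (\ref{opt}) for the support $J$, with the Robinson condition (\ref{Robinson}) entering precisely as in Theorem~\ref{necessary optimality conditions} to legitimize this multiplier representation at $\hat x$. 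I expect the genuine difficulty to lie entirely in the first stage: controlling the dual ascent by the primal movement despite the nonconvexity of $Y$ is exactly the place where the boundedness of $\{\lambda^k\}$ and the strong convexity of the $x$-step must be used in tandem.
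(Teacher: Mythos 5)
Your proposal founders on its first stage, and the root cause is a misreading of the theorem: in the paper, the relation (\ref{iteration}) is a \emph{hypothesis} of Theorem~\ref{conver}, not a conclusion to be proved. The remark immediately following the theorem says explicitly that ``the assumption on (\ref{iteration}) and the boundedness of the multiplier vectors \dots are the standard condition in the convergence analysis of the augmented Lagrangian methods for nonconvex optimization problems,'' and the paper's lemmas invoke (\ref{iteration}) as an assumption. Your attempt to derive it breaks at exactly the point you flag as delicate: controlling $\|\lambda^{k+1}-\lambda^{k}\|$ by $\|x^{k+1}-x^{k}\|$. The stationarity of the constrained $x$-update only yields
\begin{equation*}
\lambda^{k}+\rho\,(y^{k+1}-x^{k+1})\in \nabla f(x^{k+1})+N_{X}(x^{k+1}),
\end{equation*}
and the normal-cone element appearing here is neither unique nor bounded (it can be arbitrarily large when $x^{k+1}$ lies on the boundary of $X$), so differences of successive dual iterates cannot be bounded by differences of gradients; even if $X$ were all of $R^{n}$, the argument would require $\nabla f$ to be Lipschitz and $\rho$ to dominate that Lipschitz constant, neither of which is assumed; and for step-size $\omega\neq 1$ the update (\ref{updatelambda}) does not even identify $\lambda^{k+1}$ with the vector $\lambda^{k}+\rho(y^{k+1}-x^{k+1})$ in the inclusion above. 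Boundedness of $\{\lambda^{k}\}$ alone does not rescue the telescoping estimate (it gives no summability of $\|\lambda^{k+1}-\lambda^{k}\|^{2}$), so your Lyapunov argument cannot be closed under the paper's hypotheses --- which is precisely why (\ref{iteration}) is assumed rather than proved.

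Your second stage, by contrast, is correct, and it is a genuinely different route from the paper's. Granting (\ref{iteration}) and boundedness of $\{\lambda^{k}\}$, you pass to the limit in the $x$-stationarity to get $0\in\nabla f(\hat{x})-\hat{\lambda}+N_{X}(\hat{x})$, extract the sign and complementarity structure of $\hat{\lambda}_{J}$ from the closed-form projection in Theorem~\ref{updateyz}, and assemble $(\nu^{*},\eta^{*},\kappa^{*})$ explicitly with $\nu^{*}-\eta^{*}+\kappa^{*}=-\hat{\lambda}$; all the verifications go through. Notably, this construction never uses the Robinson condition (\ref{Robinson}), so your closing appeal to Theorem~\ref{necessary optimality conditions} is superfluous. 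The paper instead extracts per-iteration multipliers from the two subproblems (Lemma~\ref{kth}), uses Robinson's condition in a normalization-and-contradiction argument to prove that these multiplier sequences are bounded along a subsequence, and only then takes limits. Your explicit-multiplier construction is arguably cleaner and needs weaker hypotheses for this step; but in your write-up it is presented as the easy half, while the half you call ``the genuine difficulty'' is both unnecessary for the theorem as stated and unprovable as sketched.
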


The assumption on (\ref{iteration}) and the boundedness of the multiplier vectors in Theorem \ref{conver} are the standard condition in the convergence analysis of the augmented Lagrangian methods for nonconvex optimization problems (see \cite{Luo07}). Similar conditions have been used in the convergence analysis of alternating direction methods of multipliers for nonconvex optimization problems (see \cite{Shen14} and \cite{bai13}).

Before proceeding to the proof of Theorem \ref{conver}, we prove two lemmas.
\begin{lemma}\label{kth}
Let $J^{k+1}=\{1\leq j\leq n:y^{k+1}_{j} \neq 0\}$ and $|J^{k+1}|=r ^{k+1}$, then there exists
$(\nu^{k},\eta^{k},\kappa^{k})\in R^{n}\times R^{n}\times R^{n}$ together with the $x^{k}$, $x^{k+1}$, $y^{k+1}$ and penalty parameter $\rho$  satisfying:
\begin{eqnarray}
 0\in \nabla
 f(x^{k+1})+\nu^{k}-\eta^{k}+\kappa^{k}+\rho(x^{k+1}-x^{k})+N_{X}(x^{k+1}),
 \label{kopt1}  \\
  \nu_{i}^{k}\geq 0,\eta_{i}^{k}\geq 0,\quad \nu^{k}_{i}(
  y^{k+1}_{i}-b_{i})=0,\eta^{k}_{i}( y_{i}^{k+1}-a_{i})=0,\quad i=1,2,\dots,n,
  \label{kopt2}\\
  \kappa_{j}^{k}=0,\quad j\in J^{k+1} . \label{kopt3}
 \end{eqnarray}

\end{lemma}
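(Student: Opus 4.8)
The plan is to read off first-order optimality conditions from the two minimization steps that produce $y^{k+1}$ and $x^{k+1}$, and then to merge them by a short algebraic rearrangement built around the substitution $w^{k}=x^{k}-\tfrac1\rho\lambda^{k}$. First I would record the optimality condition for the $x$-update. Since $x^{k+1}=\mathrm{argmin}_{x\in X}L_{\rho}(x,y^{k+1},\lambda^{k})$ minimizes a convex differentiable function over the convex set $X$, the associated variational inequality gives
\begin{equation*}
0\in\nabla f(x^{k+1})-\lambda^{k}+\rho(x^{k+1}-y^{k+1})+N_{X}(x^{k+1}).
\end{equation*}
Using $\rho w^{k}=\rho x^{k}-\lambda^{k}$ I would rewrite the non-gradient terms as
\begin{equation*}
-\lambda^{k}+\rho(x^{k+1}-y^{k+1})=\rho(x^{k+1}-x^{k})+\rho(w^{k}-y^{k+1}),
\end{equation*}
so that, after matching against $(\ref{kopt1})$, it only remains to produce multipliers with $\nu^{k}-\eta^{k}+\kappa^{k}=\rho(w^{k}-y^{k+1})$ obeying $(\ref{kopt2})$–$(\ref{kopt3})$.

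For the $y$-update I would exploit the derivation in Section~\ref{subsec:Minimization of separable quadratic function}, which shows that $y^{k+1}=\mathrm{argmin}_{y\in Y}L_{\rho}(x^{k},y,\lambda^{k})$ is a global, hence local, minimizer of $\min_{y\in Y}\tfrac\rho2\|y-w^{k}\|^{2}$. This is exactly an instance of $(P)$ with objective $\tfrac\rho2\|y-w^{k}\|^{2}$ and with the general set taken to be $R^{n}$, for which the Robinson condition $(\ref{Robinson})$ holds trivially because $T_{R^{n}}\equiv R^{n}$ makes the displayed set all of $R^{n+r^{k+1}}$. Hence Theorem~\ref{necessary optimality conditions} applies at $y^{k+1}$ and furnishes $(\nu^{k},\eta^{k},\kappa^{k})$ satisfying $(\ref{kopt2})$, $(\ref{kopt3})$ and the stationarity relation
\begin{equation*}
0=\rho(y^{k+1}-w^{k})+\nu^{k}-\eta^{k}+\kappa^{k},
\end{equation*}
which is precisely $\nu^{k}-\eta^{k}+\kappa^{k}=\rho(w^{k}-y^{k+1})$. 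Substituting this identity into the rewritten $x$-optimality condition yields $(\ref{kopt1})$ and closes the argument.

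I expect the only genuinely delicate point to be the clean identification of the coupling residual $\rho(w^{k}-y^{k+1})$ with the stationarity residual of the $y$-subproblem; everything else is bookkeeping driven by the definition of $w^{k}$. As a self-contained alternative to invoking Theorem~\ref{necessary optimality conditions}, one could instead construct the multipliers directly from the closed form $y^{k+1}_{i}=z^{k+1}_{i}\min\{b_{i},\max\{w^{k}_{i},a_{i}\}\}$ of Theorem~\ref{updateyz}: on coordinates with $y^{k+1}_{i}=0$ set $\nu^{k}_{i}=\eta^{k}_{i}=0$ and $\kappa^{k}_{i}=\rho w^{k}_{i}$; on coordinates clamped to $a_{i}$ or $b_{i}$ set $\eta^{k}_{i}=\rho(a_{i}-w^{k}_{i})$ or $\nu^{k}_{i}=\rho(w^{k}_{i}-b_{i})$ respectively; and on interior coordinates set all three to zero. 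A short case check then verifies the sign constraints $\nu^{k}_{i},\eta^{k}_{i}\geq0$ and the complementarity in $(\ref{kopt2})$, where the only care needed is that $0<a_{i}<b_{i}$ forces the complementary multiplier to vanish and makes $\kappa^{k}_{i}$ free exactly off $J^{k+1}$.
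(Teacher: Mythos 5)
Your proposal is correct and takes essentially the same approach as the paper: you write the first-order condition for the $x$-update over the convex set $X$, extract multipliers for the $y$-update from the fact that $y^{k+1}$ minimizes $\frac{\rho}{2}\|y-w^{k}\|^{2}$ over the support-restricted constraints (your invocation of Theorem~\ref{necessary optimality conditions} with the convex set taken to be $R^{n}$, where Robinson's condition holds automatically, is exactly the paper's auxiliary problem $(P_{4})$ in disguise), and then add the two conditions so that the $\lambda^{k}$ and $y^{k+1}$ terms cancel, your $w^{k}$ bookkeeping being the same cancellation the paper performs. Your alternative explicit construction of $(\nu^{k},\eta^{k},\kappa^{k})$ from the closed-form solution of Theorem~\ref{updateyz} also checks out and would serve as a valid self-contained substitute for that step.
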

\begin{proof}
Using (\ref{11}) and the same augment in Theorem \ref{necessary optimality conditions},  we have $y^{k+1}$ is also a local minimizer of the following problem:
\begin{eqnarray*}
 {\rm(P_{4})}~~~\min~~ &&\|y-x^{k}+\frac{1}{\rho}\lambda^{k}\|^{2}\\
  {\rm s.t.} ~~~&&I^{T}_{J^{k+1}}(y-b)\leq 0\\
 &&I^{T}_{J^{k+1}}(-y+a)\leq 0\\
  &&I^{T}_{\bar{J}^{k+1}}y= 0.
\end{eqnarray*}
%

Now, this problem is a easy to solve convex optimization problem. And there exists $(\nu^{k},\eta^{k},\kappa^{k})$ together with $y^{k+1}$ satisfying:
   \begin{eqnarray}
   0\in \rho(y^{k+1}-x^{k}+\frac{1}{\rho}\lambda^{k})+\nu^{k}-\eta^{k}+\kappa^{k},\label{Roby1}\\
     \nu_{i}^{k}\geq 0,\eta_{i}^{k}\geq 0,\quad \nu^{k}_{i}(y^{k+1}_{i}-b_{i})=0,\quad \eta^{k}_{i}( y_{i}^{k+1}-a_{i})=0,i=1,\dots,n,\label{Roby2}\\
       \kappa_{j}^{k}=0,\quad j\in J^{k}    \label{Roby3}
\end{eqnarray}

Using the definition of $x^{k+1}$ in algorithm 1 and  the Theorem 3.34 of \cite{A.Ruszcunski}
, we get the conclusion that $x^{k+1}$ satisfies the following relationship:
\begin{equation}\label{Robinxk}
\begin{split}
0\in \nabla f(x^{k+1})-\lambda^{k}-\rho(y^{k+1}-x^{k+1})+N_{X}(x^{k+1}).
\end{split}
\end{equation}
Combining $(\ref{Roby1})$ and $(\ref{Robinxk})$, we obtain
\begin{equation*}
\begin{split}
0&\in \rho(y^{k+1}-x^{k}+\frac{1}{\rho}\lambda^{k})+\nu^{k}-\eta^{k}+\kappa^{k}\\
&+\nabla f(x^{k+1})-\lambda^{k}-\rho(y^{k+1}-x^{k+1})+N_{X}(x^{k+1}),
\end{split}
\end{equation*}
which implies that
\begin{equation*}
0\in \nabla f(x^{k+1})+\nu^{k}-\eta^{k}+\kappa^{k}+\rho(x^{k+1}-x^{k})+N_{X}(x^{k+1}).
\end{equation*}
Combining this observation and $(\ref{Roby2})$, $(\ref{Roby3})$ , we see that the conclusion holds.
\end{proof}

The proof below uses that $a_{i}\neq 0$, $b_{i}\neq 0$. In the compressed sensing problem, we can solving this issue by taking $a_{i}>0$, but very small. In practice, this works.

\begin{lemma}
There exists a subsequence $\hat{K}$  such that $\{(\nu^{k},\eta^{k},\kappa^{k})\}_{k\in\hat{ K}}$ is bounded.
\end{lemma}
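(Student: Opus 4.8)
The plan is to read off boundedness directly from the stationarity identity already established inside the proof of Lemma~\ref{kth}, combined with the disjoint-support structure forced by the complementarity conditions. First I would recall equation $(\ref{Roby1})$, which rearranges to
\[
\nu^{k}-\eta^{k}+\kappa^{k}=\rho(x^{k}-y^{k+1})-\lambda^{k}.
\]
The right-hand side is bounded: $\{\lambda^{k}\}$ is bounded by the standing hypothesis of Theorem~\ref{conver}, and from the multiplier update $(\ref{updatelambda})$ together with $(\ref{iteration})$ we get $\lambda^{k+1}-\lambda^{k}=\omega\rho(y^{k+1}-x^{k+1})\to 0$, so $y^{k+1}-x^{k+1}\to 0$; writing $x^{k}-y^{k+1}=(x^{k}-x^{k+1})-(y^{k+1}-x^{k+1})$ and using $x^{k+1}-x^{k}\to 0$ from $(\ref{iteration})$ shows $x^{k}-y^{k+1}\to 0$. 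Hence $\{\nu^{k}-\eta^{k}+\kappa^{k}\}$ is bounded.

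The key step, and the one doing the real work, is to show that at every index the three multipliers have pairwise disjoint support, so that no cancellation in $\nu^{k}-\eta^{k}+\kappa^{k}$ can mask a large component. Fix $i$ and $k$. By the complementarity $(\ref{kopt2})$, $\nu^{k}_{i}>0$ forces $y^{k+1}_{i}=b_{i}$ while $\eta^{k}_{i}>0$ forces $y^{k+1}_{i}=a_{i}$; since $a_{i}\neq b_{i}$, at most one of $\nu^{k}_{i},\eta^{k}_{i}$ is nonzero. Next, by $(\ref{kopt3})$ the component $\kappa^{k}_{i}$ can be nonzero only when $i\notin J^{k+1}$, i.e.\ $y^{k+1}_{i}=0$. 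This is precisely where the standing assumption $a_{i}\neq 0$, $b_{i}\neq 0$ (flagged in the remark preceding the lemma) enters: when $y^{k+1}_{i}=0$ we have $y^{k+1}_{i}-b_{i}=-b_{i}\neq 0$ and $y^{k+1}_{i}-a_{i}=-a_{i}\neq 0$, so $(\ref{kopt2})$ forces $\nu^{k}_{i}=\eta^{k}_{i}=0$. Collecting the cases, for each pair $(i,k)$ at most one of $\nu^{k}_{i},\eta^{k}_{i},\kappa^{k}_{i}$ is nonzero.

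Consequently $|(\nu^{k}-\eta^{k}+\kappa^{k})_{i}|=|\nu^{k}_{i}|+|\eta^{k}_{i}|+|\kappa^{k}_{i}|$ for every $i$, so every component of each of $\nu^{k},\eta^{k},\kappa^{k}$ is bounded by the corresponding (already bounded) component of $\nu^{k}-\eta^{k}+\kappa^{k}$. Thus the entire sequence $\{(\nu^{k},\eta^{k},\kappa^{k})\}$ is bounded, and the Bolzano--Weierstrass theorem produces a convergent, hence bounded, subsequence indexed by $\hat K$. I would in fact take $\hat K$ to be a sub-subsequence of the one along which $(x^{k},y^{k},\lambda^{k})\to(\hat x,\hat y,\hat\lambda)$, so that the limiting multipliers can be reused directly when passing to the limit in Lemma~\ref{kth} during the proof of Theorem~\ref{conver}.

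The main obstacle is purely the structural observation of the middle paragraph: establishing that the three multiplier vectors have disjoint supports at each index, with the delicate case $y^{k+1}_{i}=0$ resolved through $a_{i},b_{i}\neq 0$. Once this disjointness is secured, boundedness follows from the elementary identity above with no further estimation, and extraction of the subsequence is routine.
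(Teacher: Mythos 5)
Your proof is correct, but it takes a genuinely different---and more elementary---route than the paper's. The paper argues by contradiction: it supposes the multipliers are unbounded, normalizes $(\bar{\nu}^{k},\bar{\eta}^{k},\bar{\kappa}^{k})=(\nu^{k},\eta^{k},\kappa^{k})/\|(\nu^{k},\eta^{k},\kappa^{k})\|$, passes to a unit-norm limit $(\bar{\nu},\bar{\eta},\bar{\kappa})$, establishes the same support facts you use (vanishing of $\bar{\nu},\bar{\eta}$ off $J$ via $a_{i}\neq 0$, $b_{i}\neq 0$, and of $\bar{\kappa}$ on $J$), and then invokes the Robinson condition $(\ref{Robinson})$ at $\hat{x}$ to produce $(\tilde{d},\tilde{s},\tilde{t})$ with $\|\bar{\nu}\|^{2}+\|\bar{\eta}\|^{2}+\|\bar{\kappa}\|^{2}=\tilde{s}^{T}\bar{\nu}+\tilde{t}^{T}\bar{\eta}\leq 0$, contradicting unit norm. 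You instead read boundedness directly off the exact KKT equality $(\ref{Roby1})$ of the $y$-subproblem $(P_{4})$: since its constraint gradients are coordinate vectors, $\nu^{k}-\eta^{k}+\kappa^{k}=\rho(x^{k}-y^{k+1})-\lambda^{k}$ pins the multipliers down componentwise once the supports are shown pairwise disjoint, and the right-hand side is bounded. One point worth flagging: your argument uses $(\ref{Roby1})$, a relation internal to the proof of Lemma \ref{kth} rather than its statement---this is legitimate, because the lemma concerns exactly the multiplier sequence constructed there, and it is also essential, since from $(\ref{kopt1})$ alone the unbounded normal-cone term $N_{X}(x^{k+1})$ would destroy your identity. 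Your approach buys two things the paper's does not: boundedness of the \emph{entire} sequence rather than merely a subsequence, and complete independence from the Robinson condition, which is used nowhere else in the proof of Theorem \ref{conver}, so that hypothesis could in principle be dropped from the convergence theorem. Conversely, the paper's normalization-plus-constraint-qualification argument is the standard template and would survive generalizations in which the $y$-subproblem's multipliers are not explicitly determined (e.g., non-separable or non-box constraints on $y$), where your identity is unavailable. Two minor remarks: the boundedness of $\rho(x^{k}-y^{k+1})-\lambda^{k}$ along $\hat{K}$ follows even without $(\ref{iteration})$, since $Y\subseteq\prod_{i}[0,b_{i}]$ is compact, $\{\lambda^{k}\}$ is bounded, and $x^{k}\rightarrow\hat{x}$ along $\hat{K}$; and the closing appeal to Bolzano--Weierstrass is superfluous for the statement itself, since once the whole sequence is bounded, every subsequence---in particular the one along which $(x^{k},y^{k},\lambda^{k})$ converges---is bounded.
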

\begin{proof}
Since $(\hat{x},\hat{y},\hat{\lambda})$ is an accumulation point of $(x^{k},y^{k},\lambda^{k})$, there exists a subsequence $\{(x^{k},y^{k},\lambda^{k})\}_{k\in\hat{ K}}\rightarrow (\hat{x},\hat{y},\hat{\lambda})$.
It follows from (\ref{updatelambda}) that $y_{i}^{k+1}-x_{i}^{k+1}=(\lambda_{i}^{k+1}-\lambda_{i}^{k})/\omega \rho$. Taking into account the assumption (\ref{iteration}),
  we have
  \begin{equation}\label{xy}
  \hat{x}=\hat{y}.
  \end{equation}

Suppose for contradiction that it is unbounded.
By passing to a subsequence if necessary, we can assume that $\|(\nu^{k},\eta^{k},\kappa^{k})\|\rightarrow\infty$.
Let
\begin{equation*}
(\bar{\nu}^{k},\bar{\eta}^{k},\bar{\kappa}^{k})
=(\nu^{k},\eta^{k},\kappa^{k})/
\|(\nu^{k},\eta^{k},\kappa^{k})\|,
\end{equation*}
then $\|(\bar{\nu}^{k},\bar{\eta}^{k},\bar{\kappa}^{k})\|=1$.
There exists a convergent subsequence $K\subseteq\hat{K}$, such that
\begin{equation*}
(\bar{\nu}^{k},\bar{\eta}^{k},\bar{\kappa}^{k})\rightarrow
(\bar{\nu},\bar{\eta},\bar{\kappa}), \quad as \quad k\in K\rightarrow \infty ,\quad K\subseteq\hat{K},
\end{equation*}
Clearly,$\ |(\bar{\nu},\bar{\eta},\bar{\kappa})\|=1$, $\bar{\nu}\geq 0$ and $\bar{\eta}\geq 0$ .
%
Divide both sides by $\|(\nu^{k},\eta^{k},\kappa^{k})\|$ in (\ref{kopt1}) and take limits as $k\in K\rightarrow \infty$.
Noticing $f(x)$ is a continuously differentiable convex function and using the the semi continuity of $N_{X}(\cdot)$ (Lemma 2.42 of \cite{A.Ruszcunski}), we obtain
\begin{equation}\label{normal}
-\bar{\nu}+\bar{\eta}-\bar{\kappa}\in N_{X}(\hat{x})
\end{equation}

%
Dividing both sides by $\|(\nu^{k},\eta^{k},\kappa^{k})\|$ in ($\ref{kopt2}$) and ($\ref{kopt3}$), we have
\begin{eqnarray}
\bar{\nu}^{k}_{i}( y^{k+1}_{i}-b_{i})=0,\quad\bar{\eta}^{k}_{i}(y_{i}^{k+1}-a_{i})=0,\quad i=1,2\dots,n, \label{bark1}\\
\bar{\kappa}_{j}^{k}=0,\quad j\in J^{k+1}. \label{bark2}
 \end{eqnarray}

Taking limits as $k\in K\rightarrow \infty$ in (\ref{bark1}), using (\ref{xy}), we have
\begin{equation}\label{etanu}
\bar{\nu}_{i}( \hat{x}_{i}-b_{i})=0,\quad \bar{\eta}_{i}( \hat{x}_{i}-a_{i})=0,\quad i=1,2,\dots,n.
\end{equation}
By the definition of $J$, we have $\hat{x}_{i}=\hat{y}_{i}=0$, for $i\in\bar{J}$. Combining this with $(\ref{etanu})$, $a_{i}\neq0$ and $b_{i}\neq0$, we obtain
$\bar{\nu}_{i}=0$, $\bar{\eta}_{i}=0$. Moreover, we have $\|\bar{\nu}\|=\|\bar{\nu}_{J}\|$,
$\|\bar{\eta}\|=\|\bar{\eta}_{J}\|$,
 $\bar{\nu}=I_{J}\bar{\nu}_{J}$ and
 $\bar{\eta}=I_{J}\bar{\eta}_{J}$.

Now we show that for $l\in J$, $\bar{\kappa}_{l}=0$.
For $l\in J$, $\hat{y}_{l}\neq0$. Note that $y_{l}^{k}\rightarrow
 \hat{y}_{l}$, there exists a sufficiently large $\hat{k}$, such that
 $y_{l}^{k}\neq0$ for $k>\hat{k}$.
Combining this with (\ref{bark2}), we have $\bar{\kappa}_{l}^{k}=0$ for $l\in J$, $k>\hat{k}$. Taking limits as $k\in K\rightarrow \infty$, we obtain  $\bar{\kappa}_{l}=0$, for $l\in J$.
  Thus, we have $\|\bar{\kappa}\|=\|\bar{\kappa}_{\bar{ J}}\|$ and $\bar{\kappa}=I_{\bar{ J}}\bar{\kappa}_{\bar{ J}}$.

Since Robinson's condition $(\ref{Robinson})$ is satisfied at $\hat{x}$,
 there exist
 \begin{equation}\label{d}
 \tilde{d}\in T_{X}(\hat{x}),\quad \tilde{s}\in R^{r},\quad \tilde{t}\in R^{r},
  \end{equation}
such that
 \begin{equation}\label{constitution}
 \tilde{s}_{A(\hat{x})}\leq0,\quad
 \tilde{t}_{B(\hat{x})}\leq0,\quad
 I^{T}_{J}\tilde{d}-\tilde{s}=-\bar{\nu}_{J},\quad-I^{T}_{J}\tilde{d}-\tilde{t}=-\bar{\eta}_{J}, \quad I^{T}_{\bar{J}}\tilde{d}=-\bar{\kappa}_{\bar{J}} .
\end{equation}

Using (\ref{normal}), (\ref{d}), (\ref{constitution}), $\|\bar{\nu}\|=\|\bar{\nu}_{J}\|$,
$\|\bar{\eta}\|=\|\bar{\eta}_{ J}\|$,
$\|\bar{\kappa}\|=\|\bar{\kappa}_{\bar{J}}\|$,
$\bar{\nu}=I_{ J}\bar{\nu}_{J}$,
$\bar{\eta}=I_{ J}\bar{\eta}_{ J}$ and $\bar{\kappa}=I_{\bar{J}}\bar{\kappa}_{\bar{J}}$ we have
 \begin{equation*}
\begin{split}
\|\bar{\nu}\|^{2}+\|\bar{\eta}\|^{2}+\|\bar{\kappa}\|^{2}
&=\|\bar{\nu}_{J}\|^{2}+\|\bar{\eta}_{J}\|^{2}+\|\bar{\kappa}_{\bar{J}}\|^{2}\\
&=-[(-\bar{\nu}_{J})^{T}\bar{\nu}_{J}+(-\bar{\eta}_{J})^{T}\bar{\eta}_{J}+(-\bar{\kappa})_{\bar{J}}^{T}\bar{\kappa}_{\bar{J}}]\\
&=-[(I^{T}_{J}\tilde{d}-\tilde{s})^{T}\bar{\nu}_{J}+(-I^{T}_{J}\tilde{d}-\tilde{t})^{T}\bar{\eta}_{J}+(I^{T}_{\bar{J}}\tilde{d})^{T}\bar{\kappa}_{\bar{J}}]\\
&=\tilde{d}^{T}(-I_{J}\bar{\nu}_{J}+I_{J}\bar{\eta}_{J}-I_{\bar{J}}\bar{\kappa}_{\bar{J}})+(s^{T}\bar{\eta}_{J}+t^{T}\bar{\nu}_{J})\\
&=\tilde{d}^{T}(-\bar{\nu}+\bar{\eta}-\bar{\kappa})+(\tilde{s}^{T}\bar{\nu}+\tilde{t}^{T}\bar{\eta})=\tilde{s}^{T}\bar{\nu}+\tilde{t}^{T}\bar{\eta}
\end{split}
\end{equation*}
In addition, it is following from (\ref{etanu}) that if $\hat{x}_{i}\neq b_{i}$, then $\bar{\nu}_{i} =0$. Thus $\tilde{s}_{i}\bar{\nu}_{i}=0$. If $\hat{x}_{i}= b_{i}$, then $\tilde{s}_{i}\leq0$. combing this $\bar{\nu}\geq 0$, we have $\tilde{s}_{i}\bar{\nu}_{i}\leq0$.  Hence, $\tilde{s}^{T}\bar{\nu}\leq 0$.
By a similar argument as above, one can show that $\tilde{t}^{T}\bar{\eta}\leq 0$.

Using these relations, we have
$$\|\bar{\nu}\|^{2}+\|\bar{\eta}\|^{2}+\|\bar{\kappa}\|^{2}
=\tilde{s}^{T}\bar{\nu}+\tilde{t}^{T}\bar{\eta}
\leq 0.$$
It yields $\|(\bar{\nu},\bar{\eta},\bar{\kappa})\|=0$, which contradicts $\|(\bar{\nu},\bar{\eta},\bar{\kappa})\|=1$. Therefore, the subsequence $\{(\nu^{k},\eta^{k},\kappa^{k})\}_{k\in \hat{K}}$ is bounded.
\end{proof}

\textbf{Proof of Theorem \ref{conver}}
\begin{proof}
Taking into account that $\{(\nu^{k},\eta^{k},\kappa^{k})\}_{k\in \hat{K}}$ is bounded,
then there exists a subsequence $\bar{K}\subseteq K$ such that $(\nu^{k},\eta^{k},\kappa^{k})\rightarrow(\hat{\nu},\hat{\eta},\hat{\kappa})$
as $k\in \bar{K}\rightarrow \infty$.
Taking limits in $(\ref{kopt1})$,$(\ref{kopt2})$ and $(\ref{kopt3})$ as $k\in \bar{K}\rightarrow \infty$, using the relation $\hat{x}=\hat{y}$, the assumption (\ref{iteration}) and the semicontinuity of $N_{X_{0}}(\cdot)$, we get
\begin{equation*}
\begin{split}
0\in \nabla f(\hat{x})+\hat{\nu}-\hat{\eta}+\hat{\kappa}+N_{X}(\hat{x})\\
\hat{\nu}_{i}\geq 0, \quad \hat{\eta}_{i} \geq 0,\quad \hat{\nu}_{i} (\hat{x}_{i}-b_{i})=0,\quad \hat{\eta}_{i}(-a_{i}+ \hat{x}_{i})=0, \quad i=1,2,\dots,n,\\
\hat{\kappa}_{i}=0,\quad i\in J
\end{split}
\end{equation*}
Hence, $(\hat{\nu},\hat{\eta},\hat{\kappa})$ together with $\hat{x}$ satisfies the
first order stationary conditions $(\ref{opt})$.

\end{proof}

\section{Numerical results}
\label{sec:Numerical results}
In this section, we conduct numerical experiments to test the performance of SSAL proposed in Section $\ref{sec:Alternating direction method}$. We select the portfolio selection problem ($P_{NS}$) \cite{cui} and the compressed sensing problem (NCSB) as examples.
We use real market data to construct the test problems (P$_{NS}$).
We also present simulation study of the performance of SSAL for the portfolio selection problem ($P_{NS}$) \cite{cui} and the compressed sensing problem (NCSB).
The experiments demonstrate that as far as the quality of approximate solutions is concerned, SSAL outperforms the PD method and is almost as good as the CPLEX and the MIQCQP$_{1}$.
At the same time, the computational time for SSAL is significantly smaller than the other three methods.
The numerical tests are implemented in MATLAB 7.14 and run on a PC (2.10G Hz, 2GB RAM).
\subsection{Portfolio selection problem with nonsystematic risk constraint}
\label{subsec1:Portfolio}
In this subsection, we verify the efficiency and stability of SSAL method in portfolio selection problem (P$_{NS}$) of \cite{cui}:
\begin{eqnarray*}
{\rm(P_{NS})}~~~~\min  ~~ &&f(x)=x^{T}(Q+D)x \\
 {\rm s.t.}~ ~~ &&g(x)=x^{T}Dx\leq \sigma_{0},\\
&& \alpha^{T}x\geq\rho_{0},\quad e^{T}x=1,\\
&&  \|x\|_{0}\leq K ,  \\
 &&  x_{i}\in \{0\}\cup[a_{i},b_{i}],\quad i\in \{1,2,\dots,n\},
\end{eqnarray*}
where $Q$ is a semi-definition matrix and $D$ is a diagonal matrix.
Clearly, problem (P$_{NS}$) is in the form of (P), thus SSAL proposed in Section \ref{sec:Alternating direction method} can be suitably applied to solve (P$_{NS}$). The $x-$update step involves a quadratically constrained quadratic program (QCQP), which is
\begin{eqnarray*}
{\rm(QCQP)}~~~~\min  ~~ &&x^{T}\left(Q+D+\frac{\rho}{2}E\right)x-\left(\bar{\lambda}+\rho\bar{y}\right)^{T}x\\
 {\rm s.t.}~ ~~ &&g(x)=x^{T}Dx\leq \sigma_{0},\quad \alpha^{T}x\geq\rho_{0},\\
&&e^{T}x=1,\quad 0\leq x\leq b.
\end{eqnarray*}
We use the  CPLEXQCP solver in CPLEX 12.6 \cite{cplex13} with MATLAB interface to solve the (QCQP) problem.
The $y-$update step has exactly the same expression as in the Theorem \ref{updateyz}.
It is obviously that the main computational effort of SSAL lies in solving the subproblem (QCQP).
In our test, the parameters in SSAL method are set as follows: the step-size $\omega=0.3$, the initial Lagrangian multiplier $\lambda^{0}=0$ and the initial penalty $\rho=1$. We set the prescribed weekly return level $\rho_{0}=2\times10^{-3}$, the prescribed nonsystematic risk level $\sigma_{0}=1\times10^{-3}$,  $a_{i}=0.01$, and $b_{i}=0.3$, $i=1,2,\dots,n$. The cardinality upper bound $K=10$, and the tolerance parameter is set as $\epsilon=10^{-4}$.
\subsubsection{Real data set}
\label{subsec:Real data set}
In this subsection, we use the weekly return data of 481 stocks from Standard\&Poor's 500 index between 2005 and 2010. We take linear regressions on 10 sector indexes of Standard\&Poor's 500 index to construct a sector factor model.
The objective function of (P$_{NS}$) is the covariance obtained by the sector factor model while the quadratic constraint controls the nonsystematic risk $x^{T}Dx$ in the sector factor model.
We generate 5 instances for each test problem with the same size (n=50,100,150). For each instance, we choose $n$ stocks from the 481 stocks randomly.

In order to test the accuracy of solution obtained by SSAL method, we compare SSAL with the CPLEXMIQCP solver in CPLEX 12.6 \cite{cplex13}.
Introducing a variable $z_{i}\in \{0,1\}$ to indicate the zero or nonzero status of each decision variable $x_{i}$, $i=1,2,\dots,n$,
problem ($P_{NS}$) can be reformulated as follows:
\begin{eqnarray*}
{\rm(MIP_{0})}~~~~~\min~~&&x^{T}(Q+D)x \\
 {\rm s.t.}~~ ~&&x^{T}Dx\leq \sigma_{0},\\
 && \alpha^{T}x\geq\rho_{0},\quad e^{T}x=1,\\
&& e^{T}z\leq K, \quad z_{i}a_{i}\leq x_{i}\leq z_{i}b_{i},\\
&& z_{i}\in \{0,1\},\quad i=1,2,\dots,n.
\end{eqnarray*}
The experiments are conducted by using the default setting of CPLEX 12.6 with the maximum CPU time set equal to 3600 s.

To illustrate the performance of SSAL for test problems (P$_{NS}$) specifically,
we present three comparison results in terms of the computational time in seconds, the number of iteration and the objective value explored by SSAL method and CPLEX 12.6 in Table 1. We also record the relative difference (rel.dif) in Table 1, where rel.dif:=$\left(fval(x_{SSAL}\right)-fval\left(x_{CPLEX})\right)/fval(x_{CPLEX})$.
\begin{table}
\caption{SSAL versus CPELX 12.6 on the (P$_{NS})$ problem}
        \center{\begin{tabular}{ccccccccc}
        \hline
    \multirow{2}{*}{n}&\multicolumn{3}{c}{SSAL}&&\multicolumn{2}{c}{CPLEX}\\
    \cline{2-4}   \cline{6-7}\\
& fval$\times10^{-4} $& cputime & iter &&fval$\times10^{-4}$&cputime&rel.dif$\times 10^{-2}$ \\
    \hline
50 &   2.3503 &   \textcolor{red}{0.3746}&  7&&    2.2981  & 0.9093&  2.2725
\\
50 &   2.7138&   \textcolor{red}{ 0.1697}&  5 &&     2.6584  &1.4307&2.0845  \\
50 &    3.7464&   \textcolor{red}{   0.2343}&  5& &    3.6557  &    2.0024 &2.4788
 \\
50 &     3.5357 &  \textcolor{red}{ 0.6295}&  16 &&   3.5446  &  2.7720&   -0.0507 \\
50 &  2.1145&  \textcolor{red}{  0.2257}& 7 & &   2.0819  &  1.3988&    1.5695
 \\
   \hline
     100&    2.3550 &   \textcolor{red}{   0.7521} & 8& &   2.3294 &  9.8475 & 1.0969\\
          100&    1.4998&    \textcolor{red}{   0.7698} &10&& 1.5232&    15.8758 & -1.5421 \\
    100&   4.3390 &\textcolor{red}{   0.9155} &  10 & & 4.3077 &      6.5231 &0.7267
\\
     100&      2.0593 &   \textcolor{red}{   1.0874}  & 14& &  2.0380  &     46.6908& 1.0468
 \\
     100&    1.6443&   \textcolor{red}{   1.3415}  & 15 &&    1.6683&   84.1186 & -1.4396\\
         \hline
150 &      1.4061&  \textcolor{red}{  2.1155}  &  11& &1.3803 &   168.8789&1.8750
\\
 150 &     2.8602 &  \textcolor{red}{   1.4629} &  8 &&     2.8579  &  34.4529&0.0799
\\
 150 &    1.8396  &   \textcolor{red}{   2.1173}  &   13&& 1.8023 &    312.4525&  2.0674
\\
150 &       1.4054&  \textcolor{red}{    3.7521} & 16&&    1.3631 &   497.5107& 3.1043\\
 150 &   1.2593 &   \textcolor{red}{  3.9929}  & 18&&     1.2492 & 732.9191& 0.8100
\\
    \hline
 \end{tabular}}
 \label{table1}
 \end{table}

The numerical results in Table \ref{table1} show that the cost of computational efforts of SSAL is significantly less than the well-known
solver CPLEX 12.6 for solving (P$_{NS}$) on a real world data. For instance, the example listed the last row in the Table 1 shows that CPU time of SSAL is almost 200 times faster than that of the CPLEX. Obviously, we conclude that the SSAL method outperforms the solver CPLEX 12.6. Note that the optimal objective value of the problem (P$_{NS}$) solved by SSAL is almost same to that of the CPLEX. The largest absolute value of the relative difference (“rel.dif”) is less $4\%$.

Then we continue to compare SSAL method with the methods provided by Cui et al in \cite{cui}, where a new model defined as MIQCQP$_{1}$ as follows:
\begin{eqnarray*}
{\rm(MIQCQP_{1})}~~~~\min  ~~ &f(x,\delta)=x^{T}(Q+D)x+\sum_{i-1}^{n}d_{i}\delta_{i}\\
{\rm s.t.}~ ~~ &g(x)=\sum_{i=1}^{n}d_{i}\delta_{i}\leq \sigma_{0},\quad \alpha^{T}x\geq\rho_{0}, \\
&  e^{T}x=1, \quad\|x\|_{0}\leq K,\quad x_{i}^{2}\leq \delta_{i}z_{i},\\
&  x_{i}\in \{0\}\cup[a_{i},b_{i}],\quad i\in \{1,2,\dots,n\}.
\end{eqnarray*}
Problem (MIQCQP$_{1}$) is also solved by the mixed integer QCP solver
in CPLEX 12.6 with MATLAB interface using continuous relaxation for generating
lower bounds. Cui et al claimed that their MIQCQP$_{1}$  is more efficient than the  CPLEX.

Now we generate the above test problems with $n$ ranged from 100 to 400. For each $n$, we generate 20 instances by selecting $n$ stocks from the 481 stocks randomly.

We illustrate the numerical results by mean of Figure \ref{fig1}, comparing the average computational time of SSAl with that of (MIQCQP$_{1}$). Figures \ref{fig1} shows that SSAL also promises (MIQCQP$_{1}$). For example, our method is 20 times faster than that of (MIQCQP$_{1}$), under the best situation.

\begin{figure}
\centering
\includegraphics[width=4in]{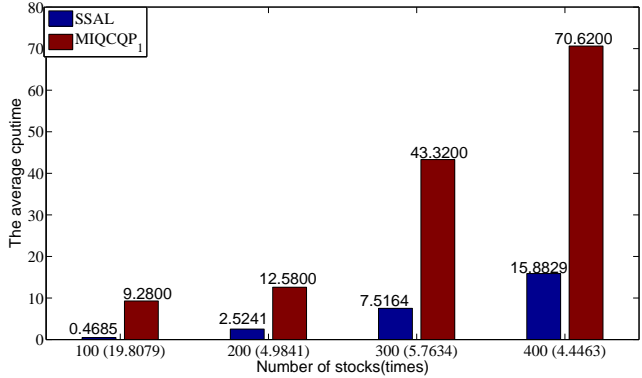}
\caption{Average CPU time (in seconds) for problems (P$_{NS}$) of SSAL and $(MIQCQP_{1})$
}\label{fig1}
\end{figure}


\subsubsection{Simulation data set}
\label{subsec:Simulation data set}

To further demonstrate the effective of performance of SSAL for large-scale problems, in this subsection, we simulate the test examples with the large size from 500 to 1000.
The simulation data is generated in the same fashion as in \cite{cui3}. The parameters in the model (P$_{NS}$) are generated randomly from the uniform distributions: ~$\alpha_{i}\in[0,0.03]$,~ $\beta_{ij}\in[0.3,2]/m$, $i=1,2,\dots n, ~j=1,2,\dots m$;~$\sigma_{ij}$, the covariance of factors $i$ and $j$ is calculated with sampled series from $[0,0.4]$ randomly, ~$i=1,2,\dots,m~,j=1,2,\dots,m$; ~ $\sigma_{\epsilon_{i}}\in[0,0.002]$,~$i=1,2,\dots n$.

Our goal is to explore the tendency of the computational time as $n$ increasing. 10 instances are generated randomly and recorded the three indexes: minimum, maximum and average numbers of iterations and computational time, respectively. It is importance to note that  CPLEX 12.6 is not able to solve the most of instances within 3600s.

 \begin{table}
 \caption{The variability for the large-scale problems}
\centerline{\small\begin{tabular}{ccccccccc}
        \hline
    \multirow{2}{*}{n}&\multicolumn{3}{c}{Computating time (s)}&&\multicolumn{3}{c}{Iteration}\\
    \cline{2-4}   \cline{6-8}\\
 & minimum & maximum& average & & minimum &maximum&average \\
    \hline
500 &     21.0029 &  48.8297&  29.82437&& 8  &  15& 12\\
600&36.9216&57.6063&47.9425&&8&11&10\\
700& 68.4558&103.8730 & 85.6297&&9 &12 &11 \\
800&126.9821 &160.2004 & 140.9647&& 8& 10&9 \\
900&173.8905 &253.4003 & 212.5805&& 8&10 &10 \\
1000&249.5224 &493.6785 & 355.7434&& 8&14 &11 \\
\hline
\end{tabular}} \label{table2}
\end{table}
It is clear from Table \ref{table2} that SSAL is robust and efficient as the number of assets increases. In addition, the number of iteration less depends on the problem size.
\subsection{Compressed sensing and subset selection}
\label{subsec2:Compressed sensing and subset selection}
In this subsection, we apply SSAL method proposed in Section $\ref{sec:Alternating direction method}$ to solve the problem (NCSB). We generate data as follows: the data matrix $A\in R^{p\times n}$ with orthonormal rows, each entry from a standard Gaussian distribution $a_{ij}\thicksim \mathcal{N}(0,1)$. The original signal $f$ is generated with $K$ nonzero entries, each sampled is the absolute value of $\hat{x}$, where $\hat{x}$ from an $\mathcal{N}(0,1)$ distribution. An observation vector $b=Af+r\in R^{p}$, where $r$ is Gaussian noise of variance $\sigma^{2}=0.01$.

Firstly, we illustrate the behavior of SSAL visually.
The $x$-update step involves an unconstrained quadratic optimization problem $P_{x}$.
The $y-$update step has exactly the same expression as in the Theorem \ref{updateyz}, where the semicontinuous bound $a_{i}=1\times 10^{-5}$, $i=1,2,\dots,n$.  We generate one instance with $(p,n)=(1024,2048)$, $K=500$ randomly.
The parameters in SSAL are set as follows: the initial Lagrangian multiplier $\lambda^{0}=0$, the tolerance parameter is set as $\epsilon=10^{-4}$, $u=10$, the step-size $\omega=1$ and the initial penalty $\rho=1$.
We use notations "Original" and "SSAL" as the original signal, the signal recovered by the SSAL respectively.
From the bottom graph of Figure \ref{fig4}, it is clear that our method almost recover the original signal exactly.

We also compare the performance of SSAL with the PD method of \cite{penal} for this simple test.
For the PD method, we set the tolerance parameter $eps=10^{-6}$ and the initial point as 0. To evaluate the quality of these sparse approximate solutions, we adopt a similar criterion as described in \cite{Figueiredo07}. The associated squared error is defined as:
\begin{equation*}
MSE:=\frac{1}{n}\|f-\hat{f}\|^{2}_{2},
\end{equation*}
where $\hat{f}$ is an estimate of the original signal $f$.
\begin{figure}
\centering
\includegraphics[width=5in,height=4in]{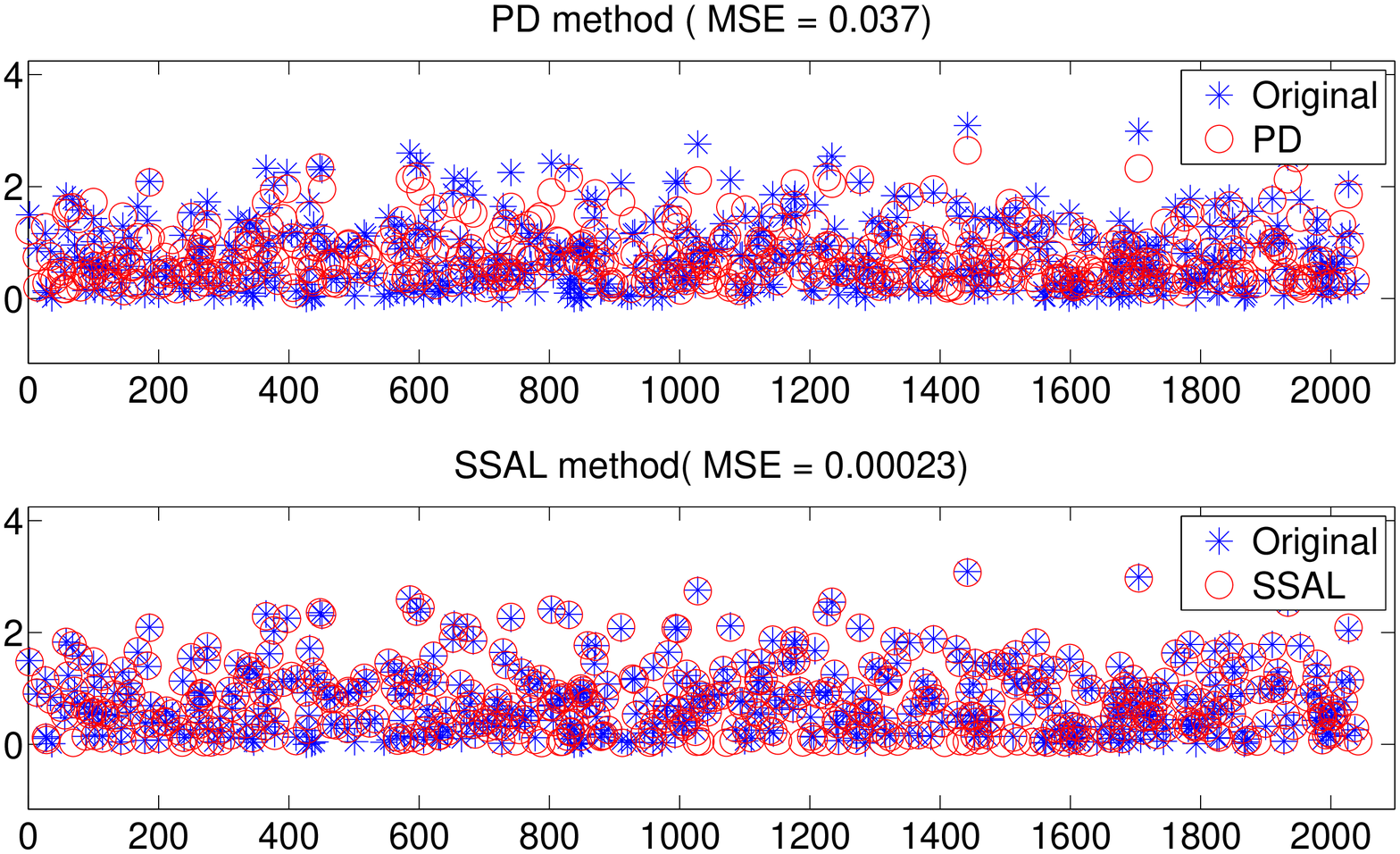}
\caption{Compare the performance of SSAL method and the PD method  to the problem (NCSB)\label{fig4}}
\end{figure}

Comparing the top graph to the bottom one in Figure \ref{fig4}, we observe
that SSAL is better recover the signal than the PD method, and the MSE of the approximate solutions obtained by SSAL are much lower than that of the PD method.

In order to illustrate that SSAL is the winner, we compare SSAL with the PD method \cite{penal} with two different sizes. We construct the testing data and then set of all the parameters’ values in the same way as the previous test.
For each size, we apply SSAL and the PD method to solve problem (NCSB) with four different values of $K$. For each such $K$, we generate the data set consisting of 50 instances randomly. The first 50 problems with $K=50$, then $K=100$, $K=150$ and $K=200$.
Figure \ref{cputime_ratio1} and Figure \ref{MSE_ratio1}  show the 10-logarithm of the ratio $t_{PD}/t_{SSAL}$ and the 10-logarithm of the ratio $MSE_{PD}/MSE_{SSAL}$ for $(p,n)=(512,1024)$, respectively.
Figure \ref{cputime_ratio2} and Figure \ref{MSE_ratio2} show the 10-logarithm of the ratio $t_{PD}/t_{SSAL}$ and the  10-logarithm of the ratio $MSE_{PD}/MSE_{SSAL}$ for $(p,n)=(1024,2048)$, respectively.
\begin{figure}
\centering
\includegraphics[width=4in]{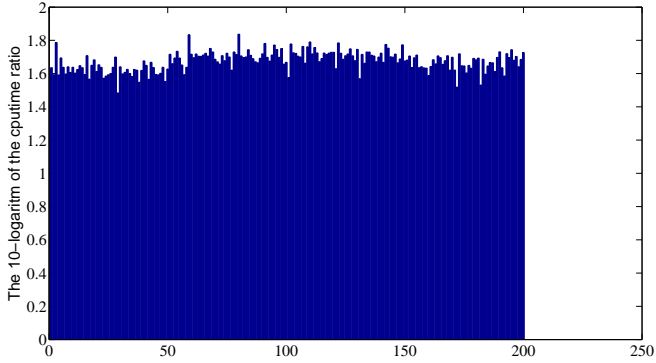}
\caption{The 10-logarithm of the ratio $\frac{t_{PD}}{t_{SSAL}}$ for $(p,n)=(512,1024)$ \label{cputime_ratio1}}
\end{figure}
\begin{figure}
\centering
\includegraphics[width=4in]{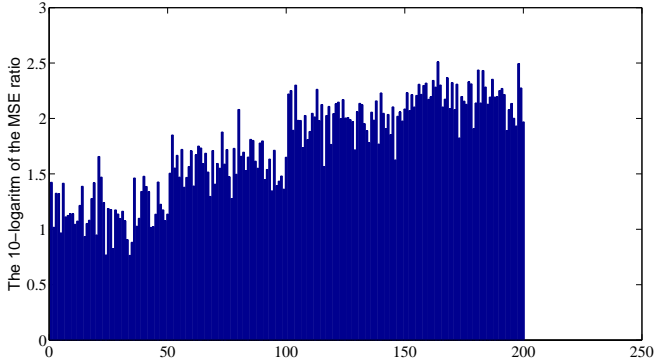}
\caption{The 10-logarithm of the MSE $\frac{MSE_{PD}}{MSE_{SSAL}}$ for $(p,n)=(512,1024)$\label{MSE_ratio1}}
\end{figure}
\begin{figure}
\centering
\includegraphics[width=4in]{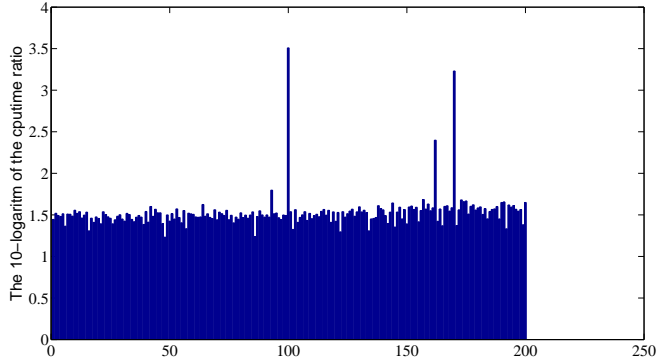}
\caption{The 10-logarithm of the ratio $\frac{t_{PD}}{t_{SSAL}}$ for $(p,n)=(1024,2048)$ \label{cputime_ratio2}}
\end{figure}
\begin{figure}
\centering
\includegraphics[width=4in]{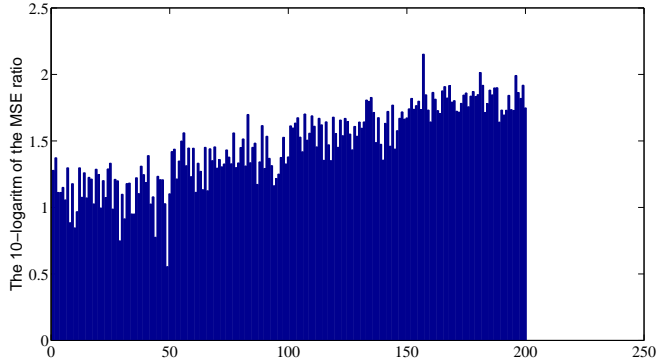}
\caption{The 10-logarithm of the MSE $\frac{MSE_{PD}}{MSE_{SSAL}}$ for $(p,n)=(1024,2048)$\label{MSE_ratio2}}
\end{figure}

From Figure \ref{cputime_ratio1} and Figure \ref{cputime_ratio2}, we observe that SSAL is substantially faster than the PD method for every case. Moreover, our method is 40 times faster than the PD method on average.
For the best situation, our method is more than 100 times faster than the PD method.
In addition, SSAL outperforms the PD method in terms of solution quality since it
archives much smaller MSE values. Thus, SSAL much better recovers the original signal than the PD method.

Empirical study demonstrates that SSAL is a powerful method to solve problem (P) for the following reasons:
(1) SSAL can find the good enough approximate solution of problem (P).
For example, the optimal objective value obtained by the SSAL and the CPLEX are almost the same for the problem (P$_{NS}$). Furthermore, the quality of the approximate solution of problem (NCSB) obtained by SSAL is generally better than that of the PD method.
(2) SSAL is a very fast method. For instance, SSAL is 200 times faster than the CPLEX method and 5 times faster than the (MIQCQP$_{1}$) method of \cite{cui} for the problem (P$_{NS}$). For the (NCSB) problem, SSAL is 40 times faster than the PD method of \cite{penal}.
\section{Conclusion}
\label{sec:Conclusion}
We have proposed a splitting and successively solving augmented Lagrangian (SSAL) method. Subproblems  have been decomposed by fixing certain variables and solving them at each iteration  alternatively.
The convergence of SSAL has  been proved, under some suitable assumptions.
Real world data and simulation study show that SSAL outperforms the CPLEX 12.6, the (MIQCQP$_{1}$) reformulation \cite{cui} and the PD method \cite{penal}, while enjoying a similar cardinality of decision variable. In particularly, SSAL is powerful when the size of the problem increase largely.


%

%

\small
\bibliographystyle{plain}
\bibliography{OMS-Liang}

\end{document}